\newcommand{\la}{\ensuremath{\rightarrow}}
\theoremstyle{plain}
\newtheorem{theorem}{Theorem}[section]
\newtheorem{lemma}[theorem]{Lemma}
\newtheorem{proposition}[theorem]{Proposition}
\newtheorem{corollary}[theorem]{Corollary}
\theoremstyle{definition}
\newtheorem{definition}[theorem]{Definition}
\newtheorem{definition-proposition}[theorem]{Definition-Proposition}
\newtheorem{example}[theorem]{Example}
\newtheorem{remark}[theorem]{Remark}
\newtheorem{question}[theorem]{Question}
\numberwithin{equation}{theorem}
\begin{document}

\title[Smoothings of normal crossing Fano schemes.]{Smoothings of Fano varieties with normal crossing singularities.}
\author{Nikolaos Tziolas}
\address{Department of Mathematics, University of Cyprus, P.O. Box 20537, Nicosia, 1678, Cyprus}
\email{tziolas@ucy.ac.cy}
%\thanks{This paper was partially written during the author's stay at the Max Planck Institute for Mathematics in Bonn, from April to August 2009.}

%    General info
\subjclass[2000]{Primary 14D15, 14D06, 14J45.}
%\date{April 21, 1999}

%\dedicatory{This paper is dedicated to our authors.}

\keywords{Algebraic geometry}

\begin{abstract}
This paper obtains criteria for a Fano variety $X$ defined over an algebraically closed field of characteristic zero with normal crossing singularities to be smoothable. In particular, we show that $X$ is smoothable by a flat deformation $\mathcal{X} \rightarrow \Delta$ with smooth total space $\mathcal{X}$ if and only if $T^1_X\cong \mathcal{O}_D$, where $D$ is the singular locus of $X$.
\end{abstract}

\maketitle

\section{Introduction}
This paper studies the deformation theory of a Fano variety defined over an algebraically closed field of characteristic zero with normal crossing singularities. In particular it is investigated when such a variety is smoothable. This means that there is a flat  projective morphism $ f \colon \mathcal{X} \la \Delta$, where $\Delta$ is the spectrum of a discrete valuation ring $(R,m_R)$, such that $\mathcal{X} \otimes_R (R/m_R) \cong X$ and $\mathcal{X} \otimes_R K(R)$ is smooth over the function field $K(R)$ of $R$. Moreover, it is investigated when such a smoothing exists with smooth total space $\mathcal{X}$. In this case $X$ is said to be totally smoothable.

Normal crossing singularities appear quite naturally in any degeneration problem. Let $f \colon \mathcal{X} \la C$ be a flat projective morphism from a variety $\mathcal{X}$ to a curve $C$. Then, according to Mumford's semistable reduction theorem~\cite{KKMS73},  after a finite base change and a birational modification the family can be brought to standard form $f^{\prime} \colon \mathcal{X}^{\prime} \la C^{\prime}$, where $\mathcal{X}^{\prime}$ is smooth and the special fibers are simple normal crossing varieties.

Smoothings of Fano varieties play a fundamental role in higher dimensional birational geometry as well. The outcome of the minimal model program starting with a smooth $n$-dimensional projective variety $X$, is a $\mathbb{Q}$-factorial terminal projective variety $Y$ such that either $K_Y$ is nef, or $Y$ has a Mori fiber space structure. This means that there is a projective morphism $f \colon Y \la Z$ such that $-K_Y$ $f$-ample, $Z$ is normal and $\dim Z \leq \dim X -1$. Suppose that the second case happens and $\dim Z=1$. Let $z \in Z$ and $Y_z= f^{-1}(z)$. Then $Y_z$ is a Fano variety of dimension $n-1$ and $Y$ is a smoothing of $Y_z$. The singularities of the special fibers are difficult to describe but normal crossing singularities naturally occur and are the simplest possible non-normal singularities.

Moreover, the study of smoothings $f \colon \mathcal{X} \la \Delta$ such that $\mathcal{X}$ is smooth, $-K_{\mathcal{X}}$ is $f$-ample and the special fiber is a simple normal crossing divisor, has a central role in the classification of smooth Fano varieties~\cite{Fu90}. In dimension two T. Fujita~\cite{Fu90} has described all the possible degenerations of smooth Del Pezzo surfaces to simple normal crossing Del Pezzo surfaces and Y. Kachi showed that all these actually occur~\cite{Kac07}. As far as I know this problem is completely open in higher dimensions.

It is therefore of interest to study which Fano varieties with normal crossing singularities are smoothable and in particular, which are totally smoothable.

The paper is organized as follows.

In section 3 we review basic properties of the theory of logarithmic structures and logarithmic deformations developed by K. Kato~\cite{Ka88} and F. Kato~\cite{Kat96} in the algebraic case and Y. Kawamata, Y. Namikawa~\cite{KawNa94} in the complex analytic case. These notions are essential in the investigation of when a Fano variety with normal crossing singularities is totally smoothable. The point is that sometimes singular varieties admit logarithmic structures in such a way that they become smooth in the log category. Moreover, deformations of varieties with smooth log structures behave like deformations of smooth varieties and therefore have very good deformation theory. Of course not all varieties admit smooth logarithmic structures. However, a variety $X$ with normal crossing singularities admits a semistable logarithmic structure and becomes log smooth if and only if $T^1_X=\mathcal{O}_D$, where $D$ is the singular locus of $X$ (Theorem~\ref{d-semistable}), which is exactly the case when $X$ is totally smoothable, and the reason why the theory of logarithmic deformations is so useful in the investigation of when a variety with normal crossing singularities is totally smoothable. 

In section 4 we study the obstruction spaces to deform a Fano variety $X$ with normal crossing singularities defined over an algebraically closed field of characteristic zero. It is well known that $H^2(T_X)$ and $H^1(T^1_X)$ are obstruction spaces to deformations of $X$. If $X$ has a semistable logarithmic structure then Theorem~\ref{log-obstructions} shows that $H^2(\mathcal{H}om_X(\Omega_X(\log),\mathcal{O}_X))$ is an obstruction space to logarithmic deformations. If $X$ is simple normal crossing, which means that $X$ has smooth irreducible components, then its obstruction theory is deeply clarified by the work of Friedman~\cite{Fr83}. However, in the general case when $X$ is not necessarily reducible, Friedman's theory does not directly apply. In Theorem~\ref{ob4} we show that if $X$ is a Fano variety with normal crossing singularities then $H^2(T_X)=0$. Moreover, if $X$ admits a semistable logarithmic structure, then $H^2(\mathcal{H}om_X(\Omega_X(\log),\mathcal{O}_X))=0$ and hence $X$ has unobstructed logarithmic deformations. However, usual deformations can be obstructed since the other obstruction space $H^1(T^1_X)$ may not vanish. This is the case in example~\ref{ex2}. However, $T^1_X$ is a line bundle on the singular locus $D$ of $X$ and in order for $X$ to be smoothable one has to impose some positivity conditions on $T^1_X$ that will force it to vanish. If $X$ has at worst double points, then in Theorem~\ref{db-ob} we show that $H^1(T^1_X)=0$ and hence $X$ unobstructed deformations in this case.

In sections 5 we apply the results of the previous sections to obtain criteria for the existence of a smoothing of a Fano variety $X$ with normal crossing singularities. We also study the problem of when $X$ is totally smoothable. Proposition~\ref{sm1} shows that if $X$ is totally smoothable, then $T^1_X \cong \mathcal{O}_D$, where $D$ is the singular locus of $X$. Therefore by Proposition~\ref{d-semistable}, $X$ has a logarithmic structure and the theory of logarithmic deformations applies in this case. The main result of section 5 is the following.

\begin{theorem}
Let $X$ be a Fano variety defined over an algebraically closed field of characteristic zero with normal crossing singularities. Assume that one of the following conditions hold:
\begin{enumerate}
\item $T^1_X$ is finitely generated by global sections and that $H^1(T^1_X)=0$.
\item $X$ has at worst double point normal crossing singularities and that $T^1_X$ is finitely generated by global sections.
\item $X$ is $d$-semistable, i.e., $T^1_X\cong \mathcal{O}_D$, where $D$ is the singular locus of $X$.
\end{enumerate}
Then $X$ is smoothable. Moreover, $X$ is smoothable by a flat deformation $f \colon \mathcal{X} \rightarrow \Delta$ such that $\mathcal{X}$ is smooth, if and only if $X$ is $d$-semistable.
\end{theorem}

I do not know if the condition that $T^1_X$ is finitely generated by its global sections is a necessary condition too for $X$ to be smoothable. $X$ is certainly not smoothable of $H^0(T^1_X)=0$~\cite{Tz10}. In all the cases of the previous theorem the condition finitely generated by global sections implies that $\mathrm{Def}(X)$ is smooth. If it is true that $\mathrm{Def}(X)$ is smooth for any $X$, then $X$ smoothable implies that $T^1_X$ is finitely generated by its global sections too.

In section 6 we give an example of a smoothable and one of a non-smoothable Fano threefold.

Finally, the requirement that we work over an algebraically closed field is more technical than essential. In the general case I believe that the definition of normal crossing singularities must be modified to allow singularities like $x_0^2+x_1^2=0$ in $\mathbb{R}^2$. This would make the arguments more complicated without adding anything of essence to the proofs. However, the characteristic zero assumption is essential since we make repeated use of the Akizuki-Kodaira-Nakano vanishing theorem.

\section{Terminology-Notation.}

All schemes in this paper are defined over an algebraically closed field $k$.

A reduced scheme $X$ of finite type over $k$ is said to have normal crossing (n.c.) singularities at a point $P\in X$ if $\hat{\mathcal{O}}_{X,P} \cong k(P)[[x_0,\dots,x_n]]/(x_0\cdots x_r)$, for some $r=r(P)$, where $k(P)$ is the residue field of $\mathcal{O}_{X,P}$ and $\hat{\mathcal{O}}_{X,P}$ is the completion of $\mathcal{O}_{X,P}$ at its maximal ideal. If $r=2$ then we say that $P\in X$ is a double point normal crossing singularity. $X$ is called a normal crossing variety if it has normal crossing singularities at every point. In addition, if $X$ has smooth irreducible components then it is called a simple normal crossing variety (s.n.c.).

A reduced projective scheme $X$ with normal crossing singularities is called a Fano variety if and only if $\omega_X^{-1}$ is an ample invertible sheaf on $X$.

For any scheme $X$ we denote by $T^1_X$ the sheaf of infinitesimal first order deformations of $X$~\cite{Sch68}. If $X$ is reduced then $T^1_X=\mathcal{E}xt^1_X(\Omega_X,\mathcal{O}_X)$. If $X$ has n.c. singularities then a straightforward local calculation shows that $T^1_X$ is a line bundle on the singular locus $D$ of $X$. Moreover, if $X=\cup_{i=1}^NX_i$ is a simple normal crossings variety, then~\cite{Fr83}
\[
T^1_X=\mathcal{H}om_D((I_{X_1}/I_{X_1}I_D) \otimes \cdots  \otimes (I_{X_k}/I_{X_k}I_D),\mathcal{O}_D) 
\]

A variety $X$ with normal crossing singularities is called $d$-semistable if and only if $T^1_X \cong \mathcal{O}_D$, where $D$ is the singular locus of $X$. 

We say that $X$ is smoothable if there is a flat morphism of finite type $f \colon \mathcal{X} \rightarrow \Delta$, $\Delta=\mathrm{Spec}(R)$, where $R$ is a discrete valuation ring, such that the cental fiber $\mathcal{X}_0$ is isomorphic to $X$ and the general fiber $\mathcal{X}_g$ is smooth over the function field $K(R)$ of $R$. 

Finally, we will repeatedly make use of the Akizuki-Kodaira-Nakano vanishing theorem and its logarithmic version, which we state next.
\begin{theorem}[Akizuki-Kodaira-Nakano~\cite{AN54},~\cite{EV92}]\label{KN}
Let $X$ be a smooth variety defined over an algebraically closed field of characteristic zero and $\mathcal{L}$ an ample invertible sheaf on $X$. Then
\item
\[
H^b(X,\Omega_X^a\otimes \mathcal{L}^{-1})=0
\]
for all $a,b$ such that $a+b < \dim X$.
\item Moreover, if $D$ is a reduced simple normal crossings divisor of $X$, then
\[
H^b(X,\Omega_X^a(\mathrm{log}(D))\otimes \mathcal{L}^{-1})=0
\]
for all $a,b$ such that $a+b < \dim X$.
\end{theorem}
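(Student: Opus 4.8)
The two displayed statements are the absolute and logarithmic Nakano vanishing theorems in their $\mathcal L^{-1}$-twisted, low-degree form. My plan is to reduce both to the absolute case $D=\emptyset$ and to treat that case by the Hodge-theoretic covering method of Esnault and Viehweg~\cite{EV92}, the absolute case itself being the classical theorem of Akizuki, Kodaira and Nakano~\cite{AN54}. The only non-formal input I will use is the degeneration at $E_1$ of the Hodge-de Rham spectral sequence
\[
E_1^{p,q}=H^q\bigl(Z,\Omega_Z^p(\log R)\bigr)\Longrightarrow \mathbb{H}^{p+q}\bigl(Z,\Omega_Z^\bullet(\log R)\bigr)
\]
for a smooth projective variety $Z$ over a field of characteristic zero equipped with a reduced simple normal crossings divisor $R$; equivalently, $\sum_{p+q=m}\dim E_1^{p,q}=\dim\mathbb{H}^m$ for every $m$. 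Note that I will only ever need this for \emph{trivial} coefficients, the twist being produced later by a covering. (For the absolute statement alone one could instead argue analytically, endowing $\mathcal L^{-1}$ with a negatively curved metric and applying the Bochner-Kodaira-Nakano identity to a harmonic $(a,b)$-form; but that argument does not see the logarithmic poles, which is why I prefer the covering method.)

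\emph{The covering trick.} Fix an integer $N$ with $\mathcal L^N$ very ample and, by Bertini, a smooth member $H\in|\mathcal L^N|$, and let $\pi\colon Y\to X$ be the degree-$N$ cyclic cover defined by an $N$-th root of a section cutting out $H$. Since $H$ is smooth, $Y$ is smooth projective, $\pi$ is totally ramified along the reduced divisor $R=\pi^{-1}(H)_{\mathrm{red}}$ with $\pi^{*}H=NR$, and in characteristic zero the identity $N\,dt/t=\pi^{*}(ds/s)$ yields $\pi^{*}\Omega_X^a(\log H)\cong\Omega_Y^a(\log R)$. Together with $\pi_*\mathcal O_Y=\bigoplus_{i=0}^{N-1}\mathcal L^{-i}$ and the $\mu_N$-equivariance of the construction, this gives an eigensheaf decomposition
\[
\pi_*\,\Omega_Y^a(\log R)\cong\bigoplus_{i=0}^{N-1}\Omega_X^a(\log H)\otimes\mathcal L^{-i}
\]
compatible with the differentials. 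Now $Y\setminus R=\pi^{-1}(X\setminus H)$ is finite over the affine variety $X\setminus H$, hence is itself smooth affine of dimension $n=\dim X$, so $H^m(Y\setminus R,\mathbb{C})=0$ for $m>n$. Feeding the $E_1$-degeneration for $(Y,R)$ into this and reading off the $i$-th $\mu_N$-eigenspace, I obtain
\[
\sum_{a+b=m}\dim H^b\bigl(X,\Omega_X^a(\log H)\otimes\mathcal L^{-i}\bigr)=0\qquad(m>n),
\]
and hence $H^b\bigl(X,\Omega_X^a(\log H)\otimes\mathcal L^{-i}\bigr)=0$ whenever $a+b>n$, for every $i$.

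\emph{Flipping the range and stripping the poles.} Serre duality, via the perfect pairing $\Omega_X^a(\log H)\otimes\Omega_X^{n-a}(\log H)\to\Omega_X^n(\log H)=\omega_X\otimes\mathcal L^{N}$, converts the vanishing just obtained into $H^q\bigl(X,\Omega_X^p(\log H)\otimes\mathcal L^{-j}\bigr)=0$ for $p+q<n$ and $j=1,\dots,N$; taking $j=1$ produces the twist we want. To delete the auxiliary pole along $H$ I would induct on $\dim X$ using the residue sequence
\[
0\to\Omega_X^p\otimes\mathcal L^{-1}\to\Omega_X^p(\log H)\otimes\mathcal L^{-1}\to\Omega_H^{p-1}\otimes(\mathcal L|_H)^{-1}\to 0,
\]
the inductive hypothesis supplying the vanishing of the restriction term on $H$ (where $\mathcal L|_H$ is ample and $\dim H=n-1$) in the appropriate range; this establishes the absolute statement. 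Finally, to pass from $D=\emptyset$ to an arbitrary boundary $D$ I would run a second induction on the number of irreducible components of $D$, peeling off one component $D_1$ through the analogous residue sequence and invoking, by the dimension induction, the logarithmic statement on $D_1$. I expect the one genuinely substantial ingredient to be the $E_1$-degeneration, which concentrates all of the characteristic-zero Hodge theory (or, alternatively, the Deligne-Illusie reduction to positive characteristic); the covering decomposition, the duality, and the two residue inductions are then bookkeeping, the only delicate points being the correct identification of the eigenvalues in the decomposition and the transversality guaranteeing that $D+H$ stays simple normal crossing.
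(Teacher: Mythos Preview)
The paper does not prove this statement at all: Theorem~\ref{KN} is stated with references to~\cite{AN54} and~\cite{EV92} and then simply quoted as a tool, so there is no ``paper's own proof'' to compare against. Your sketch is a correct outline of the standard Esnault--Viehweg argument from~\cite{EV92}: the cyclic cover branched along a smooth $H\in|\mathcal L^{N}|$, the eigensheaf decomposition of $\pi_*\Omega_Y^a(\log R)$, the identification of the log de~Rham hypercohomology of $(Y,R)$ with the cohomology of the affine variety $Y\setminus R$, $E_1$-degeneration to kill each summand in degrees $>n$, Serre duality to flip to degrees $<n$, and then the residue-sequence inductions to strip the auxiliary pole $H$ and to add the genuine boundary $D$ component by component. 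The double induction (on $\dim X$ and on the number of components of $D$) is set up correctly, and your remark about needing $H$ transverse to $D$ in the log case is the right caveat. In short: nothing to compare, and your argument is the standard one from the cited source.
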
 
\section{Logarithmic Structures.}

In order to study the deformation theory of certain Fano varieties we will use the theory of logarithmic structures and deformations which  was developed by K. Kato~\cite{Ka88} and F. Kato~\cite{Kat96} in the algebraic case and Y. Kawamata, Y. Namikawa in the complex analytic case~\cite{KawNa94}. For the convenience of the reader we make a short review of basic properties and results that will be used in this paper and refer the reader to the aforementioned papers for more details. 

\begin{definition}
Let $X$ be a scheme. A pre-logarithmic structure on $X$ is a sheaf of monoids $\mathcal{M}$ on the \'etale site $X_{et}$ together with a sheaf of monoids homomorphism $ \alpha \colon \mathcal{M} \rightarrow \mathcal{O}_X$, with respect to the multiplication of $\mathcal{O}_X$. A pre-logarithmic structure is called a logarithmic structure if $\alpha^{-1}(\mathcal{O}_X^{\ast}) \cong \mathcal{O}_X^{\ast}$. 

For simplicity, from now on pre-logarithmic structures will be called pre-log structures and logarithmic structures will be called log-structures.
\end{definition}

A morphism $(X,\mathcal{M}) \rightarrow (Y,\mathcal{N})$ of schemes with log structures is a pair $(f,g)$, where $f \colon X \rightarrow Y$ is a scheme morphism, $g \colon f^{-1}(\mathcal{N}) \rightarrow \mathcal{M}$ a sheaf of monoids map such that the following diagram commutes
\[
\xymatrix{
 f^{-1}(\mathcal{N})\ar[d]\ar[r]^g &  \mathcal{M}\ar[d] \\
   f^{-1}(\mathcal{O}_Y)\ar[r] & \mathcal{O}_X
}
\]
To any pre-log structure $(\mathcal{M},\alpha)$ on a scheme $X$ there is a naturally defined log structure $(M^a,\alpha)$ which is universal for homomorphisms of pre-log structures from $\mathcal{M}$ to log-structures of $X$. Moreover, given a scheme morphism $f \colon X \rightarrow Y$ and log structures $\mathcal{M}, \mathcal{N}$ on $X$ and $Y$ respectively, the preimage and direct image log structures $f^{\ast}\mathcal{N}$ and $f_{\ast}\mathcal{M}$ are also naturally defined.

Let $\mathcal{M}$ be a log structure on a scheme $X$. The log structure is called integral if $\mathcal{M}$ is a sheaf of integral monoids and fine if \'etale locally on $X$ there is a finitely generated monoid $P$ and a sheaf of monoids map $P_X \rightarrow \mathcal{O}_X$, where $P_X$ is the constant sheaf associated to $P$, such that the log structure associated to $P_X$ is $\mathcal{M}$.  

A morphism $(X,\mathcal{M}) \rightarrow (Y,\mathcal{N})$ of schemes with fine log structures is called smooth if it satisfies a logarithmic version of the infinitesimal criterion of smoothness. It is a natural extension of the usual notion of smoothness in the category of schemes with fine log structures. An interesting part of the theory is that morphisms that are not smooth in the category of schemes, become smooth in the log-category, with suitably chosen log-structures. Example~\ref{ss-example} exhibits such a case.

\subsection{Log differentials, log derivations and log deformations.}

There is a natural extension of differentials, derivations and deformations in the log category. 
\begin{definition}
\begin{enumerate}
\item Let $f \colon (X,\mathcal{M}) \rightarrow (Y,\mathcal{N})$ be a morphism of schemes with fine log-structures and let $\mathcal{E}$ be an $\mathcal{O}_X$-module. A log-derivation from $(X,\mathcal{M})$ to $\mathcal{E}$ over $(Y,\mathcal{N})$ is a pair $(D,D\log)$, where $D \in Der_Y(X,\mathcal{E})$ is a usual derivation and $D\log \colon \mathcal{M} \rightarrow \mathcal{E}$ is a map such that
\begin{enumerate}
\item $D\log(ab)=D\log(a)+D\log(b)$, for $a,b \in \mathcal{M}$,
\item $D(\alpha(a))=\alpha(a)D\log(a)$, for $a \in \mathcal{M}$,
\item $D\log(\phi(c))=0$, for all $c \in f^{-1}\mathcal{N}$, where $\phi \colon f^{-1} \rightarrow \mathcal{M}$ is the sheaf of monoids map associated to the morphism $f$.
\end{enumerate}  
\item The sheaf of log-differentials of $(X,\mathcal{M})$ over $(Y,\mathcal{N})$ is the $\mathcal{O}_X$-module $\Omega_{X/Y}(\log(\mathcal{M}/\mathcal{N})$ defined by
\[
\Omega_{X/Y}(\log(\mathcal{M}/\mathcal{N})) = \frac{\Omega_{X/Y}\oplus (\mathcal{O}_X\otimes_{\mathbb{Z}}\mathcal{M}^{gp})}{K}
\]
where $K$ is the $\mathcal{O}_X$-submodule generated by $(d\alpha(a),0)-(0,\alpha(a)\otimes a)$ and $(0,1\otimes \phi(b))$, for all $a \in \mathcal{M}$, $b \in \mathcal{N}$.
\end{enumerate}
\end{definition} 

Next we define the notion of a log-deformation. Let $\Lambda$ be a complete Noetherian local ring with maximal ideal $m_{\Lambda}$ and residue field $k$. Let $Q$ a fine saturated monoid having no invertible element other than $1$. $Q$ defines a log-structure $(\mathrm{Spec}k,Q)$. Let $\Lambda[[Q]]$ be the completion of the monoid ring $\Lambda[Q]$ along the maximal ideal $m_{\Lambda}+\Lambda[Q-\{1\}]$ (if $Q=\mathbb{N}$, $\Lambda[[Q]]=\Lambda[[t]]$). The map $Q \rightarrow \Lambda[[Q]]$ defines a log-structure on $\Lambda[[Q]]$ and on any Artin local $\Lambda$-algebra $A$, via the $\Lambda[[Q]]$-algebra map $\Lambda[[Q]] \rightarrow A$. Let $Art_{\Lambda[[Q]]}(k)$ be the category of local Artin $ \Lambda[[Q]]$-algebras. 

\begin{definition}
Let $f \colon (X,\mathcal{M})\rightarrow (\mathrm{Spec}k,Q)$ be a log-smooth morphism and $A \in Art_{\Lambda[[Q]]}(k)$. A log-smooth deformation of $f$ over $A$ is a a cartesian diagram
\[
\xymatrix{ 
(X,\mathcal{M}) \ar[r]\ar[d]^f & (X_A,\mathcal{M}_A) \ar[d]^{f_A} \\
(\mathrm{Spec}k,Q) \ar[r] & (\mathrm{Spec}(A),Q)
}
\]
where $f_A \colon (X_A,\mathcal{M}_A) \rightarrow (\mathrm{Spec}(A),Q)$ is log-smooth. In particular, if $Q=\mathbb{N}$, then the underlying scheme morphisms are flat and hence a usual deformation.
\end{definition}
Having defined log-deformations, the log-deformation functor \[
LD(X,\mathcal{M}) \colon Art_{\Lambda[[Q]]}(k) \rightarrow (Sets)
\]
is naturally defined. 
\begin{theorem}[Theorem 8.7~\cite{Kat96}]\label{hull}
If $f \colon (X\mathcal{M})\rightarrow (\mathrm{Spec}k,Q)$ is integral and $X$ proper, then $LD(X,\mathcal{M})$ has a hull.
\end{theorem}

The log-deformation theory of log-smooth maps is very similar to the deformation theory of smooth varieties. The next theorem describes the obstructions to lift log-smooth deformations. 
\begin{theorem}[K.Kato Theorem 3.14~\cite{Ka88}]\label{log-obstructions}
Let $f_A \colon (X_A,\mathcal{M}_A) \rightarrow (\mathrm{Spec}(A),Q)$ be a log-deformation of the log-smooth map $f \colon (X,\mathcal{M})\rightarrow (\mathrm{Spec}k,Q)$. Let \[
0 \rightarrow I \rightarrow B \rightarrow A \rightarrow 0
\]
be a square zero extension in $Art_{\Lambda[[Q]]}(k)$. Then the obstructions for lifting $f_A$ to $B$ are in 
\[
H^2(X_A,\mathcal{H}om_{{X_A}}(\Omega_{{X_A}/A}(\log(\mathcal{M}_A /Q))))\otimes_A I
\]
\end{theorem}

\subsection{Logarithmic structures on varieties with normal crossing singularities.}

Next we present some logarithmic structures on schemes with normal crossing singularities which will be needed for the study of smoothability of Fano varieties with normal crossing singularities.

\begin{example}\label{emb-example}
Let $D \subset X$ be a reduced divisor with normal crossings in a smooth scheme $X$. Let $M \subset \mathcal{O}_{X}$ be the subsheaf of $\mathcal{O}_{X}$ of regular functions that are invertible outside of $D$. $M$ is a log-structure on $X$. Moreover, if $i \colon D \rightarrow X$ is the closed immersion, then $i^{\ast}M$ is a log-structure on $D$. This log-structure is called of embedding type and it is fine because \'etale locally around $D$, $D \subset X$ is just $(x_1\cdots x_r=0) \subset \mathbb{A}^d_k$ and the log-structure is induced by the monoid map
\[
\alpha \colon \mathbb{N}^d \rightarrow \frac{k[x_1,\ldots,x_d]}{(x_1\cdots x_r)}
\]
given by $\alpha(e_i)= x_i$, if $i \leq r$, and $1$ for $r< i \leq d$.

\end{example}

\begin{example}\label{ss-example}
Let \[
X=\mathrm{Spec} \frac{k[x_1,\ldots,x_d]}{(x_1\cdots x_r)}
\]
be a simple normal crossing variety and $\mathbb{N}^d \rightarrow \mathcal{O}_X$ the log structure defined in the previous example. The map $\beta \colon \mathbb{N} \rightarrow k$ such that $\beta(0)=1$ and $\beta (n)=0$, for $n \not=0$ defines a log-structure on $\mathrm{Spec}k$. Let $\delta \colon \mathbb{N} \rightarrow \mathbb{N}^d$ be the diagonal map. Then the corresponding map of log-schemes $(X,\mathbb{N}^d) \rightarrow (\mathrm{Spec}k, \mathbb{N})$ is log smooth and is called a logarithmic semi-stable map.

\end{example}

\begin{definition}
Let $\alpha \colon \mathcal{M} \rightarrow \mathcal{O}_X$ be a log-structure on a scheme $X$ with normal crossing singularities.
\begin{enumerate} 
\item The log-structure is called of embedding type if locally in the \'etale topology it is equivalent to the log-structure of embedding type defined in example~\ref{emb-example}.
\item The log-structure is called of semistable type if there is a map of log schemes $f \colon (X,\mathcal{M}) \rightarrow (\mathrm{Spec}k, \mathbb{N})$ which is locally in the \'etale topology equivalent to the logarithmic semistable map defined in example~\ref{ss-example}. 
\end{enumerate}
In the case of a scheme with semistable log-structure as above, we denote for simplicity by $\Omega_X(\log)$ the sheaf of logarithmic differentials of $(X,\mathcal{M})$ over $(\mathrm{Spec}k,\mathbb{N})$. 
$\Omega_{X}(\log)$ is a free $\mathcal{O}_X$-module locally generated by the logarithmic differentials 
\[
\frac{dx_1}{x_1},\ldots, \frac{dx_r}{x_r},dx_{r+1},\ldots,dx_n
\]
with the relation
\[
\frac{dx_1}{x_1}+\cdots + \frac{dx_r}{x_r}=0.
\]

The existence of logarithmic structures of embeded type or semistable type is very closely related to the deformation theory of $X$.

\begin{theorem}[F. Kato, Theorem 11.7~\cite{Kat96}]\label{d-semistable}
Let $X$ be a scheme with normal crossing singularities and $D$ its singular locus.
\begin{enumerate}
\item A log-structure of embedding type exists on $X$ if and only if there exists a line bundle $L$ on $X$ such that $L\otimes_{\mathcal{O}_X}\mathcal{O}_D \cong T^1_X$.
\item A log-structure of semistable type exists on $X$ if and only if $T^1_X \cong \mathcal{O}_D$, i.e., if $X$ is $d$-semistable.
\end{enumerate}
\end{theorem}

\end{definition}
\section{Obstructions}
Let $X$ be a variety with normal crossing singularities. It is well known that $H^2(T_X)$ and $H^1(T^1_X)$ are obstruction spaces to deformations of $X$. The purpose of this section is to describe these spaces and moreover to describe the obstruction space $H^2(\mathcal{H}om(\Omega_X(log),\mathcal{O}_X))$ to logarithmic deformations of a n.c. variety $X$ with a semistable logarithmic structure. We begin with some preliminary results.

\begin{proposition}[~\cite{Fr83}]\label{ob1}
Let $X$ be a scheme with only normal crossing singularities. Let $\tau_X \subset \Omega_X$ be the torsion subsheaf of $\Omega_X$. Then for all $i \geq 0$,
\begin{gather*}
\Omega_X/\tau_X \cong \Omega_X^{\ast\ast} \\
\mathrm{Ext}_X^i(\Omega_X/\tau_X,\mathcal{O}_X)\cong H^i(T_X) \\
\mathrm{Ext}_X^i(\tau_X,\mathcal{O}_X)=H^{i-1}(T^1_X)
\end{gather*}
\end{proposition}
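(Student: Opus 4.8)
The plan is to reduce the computation of the $\mathcal{E}xt$-sheaves to the local ring $\mathcal{O}=\mathcal{O}_{X,P}$ at a singular point $P\in D$, where $\mathcal{O}\cong k[[x_0,\dots,x_n]]/(x_0\cdots x_r)$ (away from $D$ the sheaf $\Omega_X$ is locally free, $\tau_X=0$, and there is nothing to prove), and then to obtain the global statements through the local--to--global spectral sequence $H^p(\mathcal{E}xt^q_X(-,\mathcal{O}_X))\Rightarrow\mathrm{Ext}^{p+q}_X(-,\mathcal{O}_X)$; this is legitimate because $\Omega_X$, $\tau_X$, $T^1(X)$ and the $\mathcal{E}xt$-sheaves all commute with flat base change. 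Two elementary remarks are used throughout: $\mathcal{O}$ is a hypersurface ring, hence Gorenstein; and, $\mathcal{O}_X$ being reduced, hence torsion-free, $\mathcal{H}om_X(\tau_X,\mathcal{O}_X)=0$, so applying $\mathcal{H}om_X(-,\mathcal{O}_X)$ to $0\to\tau_X\to\Omega_X\to\Omega_X/\tau_X\to 0$ gives $\mathcal{H}om_X(\Omega_X/\tau_X,\mathcal{O}_X)=\mathcal{H}om_X(\Omega_X,\mathcal{O}_X)=T_X$ and, dualizing a second time, $(\Omega_X/\tau_X)^{\ast\ast}=\Omega_X^{\ast\ast}$. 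In particular the first assertion is equivalent to the reflexivity of $\Omega_X/\tau_X$.

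Next I would identify $\Omega_X/\tau_X$ locally. Since $\pi_{\ast}\Omega_{\widetilde{X}}$ is torsion-free over $\mathcal{O}_X$, where $\pi\colon\widetilde{X}\to X$ is the normalization, the kernel of the natural map $\Omega_X\to\pi_{\ast}\Omega_{\widetilde{X}}$ is exactly $\tau_X$, so $\Omega_X/\tau_X$ is the image of $\Omega_X$ in $\pi_{\ast}\Omega_{\widetilde{X}}$. In the local model $\pi_{\ast}\Omega_{\widetilde{X}}=\bigoplus_{i=0}^{r}\Omega_{\{x_i=0\}}$ is a direct sum of free modules over the components, the class of $dx_j$ maps to the tuple which equals $dx_j$ on $\{x_i=0\}$ for $i\neq j$ and $0$ on $\{x_j=0\}$, and one computes directly that $\sum_{j=0}^{r}c_j\,dx_j$ maps to zero exactly when each $c_j$ lies in the principal ideal $(p_j)$ with $p_j=\prod_{i\neq j}x_i$. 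After splitting off the free part corresponding to $dx_{r+1},\dots,dx_n$ this gives
\[
\Omega_X/\tau_X\;\cong\;\mathcal{O}^{\,n-r}\;\oplus\;\bigoplus_{j=0}^{r}\mathcal{O}/(p_j),
\]
and because $x_0\cdots x_r=x_j p_j$ each summand $\mathcal{O}/(p_j)\cong k[[x_0,\dots,x_n]]/(p_j)$ is a hypersurface quotient of the same dimension as $\mathcal{O}$, hence a maximal Cohen--Macaulay $\mathcal{O}$-module; a finite direct sum of maximal Cohen--Macaulay modules is again such, so $\Omega_X/\tau_X$ is a maximal Cohen--Macaulay $\mathcal{O}_X$-module.

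Now the three conclusions follow formally. A maximal Cohen--Macaulay module over a Gorenstein local ring is reflexive and satisfies $\mathcal{E}xt^{q}(-,\mathcal{O})=0$ for $q\ge 1$; here this is also visible from the $2$-periodic free resolution $\cdots\to\mathcal{O}\xrightarrow{\cdot\,x_j}\mathcal{O}\xrightarrow{\cdot\,p_j}\mathcal{O}\to\mathcal{O}/(p_j)\to 0$, which yields $\mathcal{E}xt^{\ge 1}(\mathcal{O}/(p_j),\mathcal{O})=0$ and $\mathcal{H}om(\mathcal{O}/(p_j),\mathcal{O})\cong\mathcal{O}/(p_j)$. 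This proves the reflexivity of $\Omega_X/\tau_X$, i.e.\ the first assertion, and, since $\mathcal{E}xt^q_X(\Omega_X/\tau_X,\mathcal{O}_X)=0$ for $q\ge 1$, the spectral sequence collapses to $\mathrm{Ext}^i_X(\Omega_X/\tau_X,\mathcal{O}_X)\cong H^i(T_X)$, the second. For the third, apply $\mathcal{H}om_X(-,\mathcal{O}_X)$ to $0\to\tau_X\to\Omega_X\to\Omega_X/\tau_X\to 0$: using $\mathcal{H}om_X(\tau_X,\mathcal{O}_X)=0$ and $\mathcal{E}xt^{\ge 1}_X(\Omega_X/\tau_X,\mathcal{O}_X)=0$ one gets $\mathcal{E}xt^j_X(\tau_X,\mathcal{O}_X)\cong\mathcal{E}xt^j_X(\Omega_X,\mathcal{O}_X)$ for all $j\ge 1$. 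Over the reduced hypersurface $\mathcal{O}=k[[x_0,\dots,x_n]]/(f)$, $f=x_0\cdots x_r$, the annihilator of $\mathrm{d}f$ in $\mathcal{O}$ is zero, so $0\to\mathcal{O}\xrightarrow{\mathrm{d}f}\mathcal{O}^{n+1}\to\Omega_X\to 0$ is a length-one free resolution; hence $\mathcal{E}xt^j_X(\Omega_X,\mathcal{O}_X)=0$ for $j\ge 2$, while $\mathcal{E}xt^1_X(\Omega_X,\mathcal{O}_X)=T^1(X)$ by definition. Thus $\mathcal{E}xt^j_X(\tau_X,\mathcal{O}_X)=0$ for $j\neq 1$ and $\mathcal{E}xt^1_X(\tau_X,\mathcal{O}_X)=T^1(X)$, and the spectral sequence for $\tau_X$ degenerates to $\mathrm{Ext}^i_X(\tau_X,\mathcal{O}_X)\cong H^{i-1}(T^1(X))$.

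The only step that is not purely formal is the local identification of $\Omega_X/\tau_X$: the point to be checked with care is that the relations among the images of $dx_0,\dots,dx_r$ in $\pi_{\ast}\Omega_{\widetilde{X}}$ decouple, each coefficient $c_j$ being independently forced into $(p_j)$, which then exhibits $\Omega_X/\tau_X$ as a direct sum of hypersurface quotients of full dimension and hence as a maximal Cohen--Macaulay module. This is the one place where a genuine (if short) computation in the normal crossing model is needed; everything else --- the spectral sequence, the Gorenstein homological algebra, and the standard description of $T^1(X)$ --- is routine.
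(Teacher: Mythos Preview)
The paper does not give its own proof of this proposition: it is quoted from Friedman~\cite{Fr83} and stated without argument, so there is no in-paper proof to compare against. Your argument is correct and self-contained. The local identification
\[
\Omega_X/\tau_X \;\cong\; \mathcal{O}^{\,n-r}\ \oplus\ \bigoplus_{j=0}^{r}\mathcal{O}/(p_j),\qquad p_j=\textstyle\prod_{i\neq j}x_i,
\]
is verified exactly as you indicate (the relations among the images of $dx_0,\dots,dx_n$ in $\pi_\ast\Omega_{\widetilde X}$ decouple coordinate by coordinate, forcing $c_j\in(p_j)$ independently), and each $\mathcal{O}/(p_j)\cong R/(p_j)$ is a hypersurface of the same dimension as $\mathcal{O}$, hence maximal Cohen--Macaulay. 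The $2$-periodic resolution $\cdots\to\mathcal{O}\xrightarrow{x_j}\mathcal{O}\xrightarrow{p_j}\mathcal{O}\to\mathcal{O}/(p_j)\to 0$ makes the vanishing of the higher $\mathcal{E}xt$'s and the reflexivity explicit without invoking the general Gorenstein fact, which is a nice touch. The passage from the complete local ring to $\mathcal{O}_{X,P}$ is legitimate because the sheaf $\Omega_X$ has, \'etale-locally, the finite presentation $\mathcal{O}\xrightarrow{df}\mathcal{O}^{n+1}\to\Omega_X\to 0$ coming from the hypersurface embedding, and both this presentation and the formation of $\mathcal{E}xt$ commute with the faithfully flat base change to the completion; the local-to-global spectral sequence then gives all three global statements.
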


\begin{corollary}\label{ob2}
Let $X$ be a projective scheme with only normal crossing singularities. Then
\[
H^2(T_X)=H^{n-2}((\Omega_X/\tau_X) \otimes \omega_X)
\]
where $\dim X =n$.
\end{corollary}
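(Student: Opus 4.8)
The plan is to use Serre duality on the projective scheme $X$ together with the identification $H^2(T_X) \cong \mathrm{Ext}^2_X(\Omega_X/\tau_X, \mathcal{O}_X)$ supplied by Proposition~\ref{ob1}. First I would recall that since $X$ has normal crossing singularities it is Gorenstein, so it has a dualizing sheaf $\omega_X$ which is invertible, and Serre duality holds in the form $\mathrm{Ext}^i_X(\mathcal{F}, \omega_X) \cong H^{n-i}(X,\mathcal{F})^{\vee}$ for any coherent sheaf $\mathcal{F}$, where $n = \dim X$.

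The key computation is then to rewrite $\mathrm{Ext}^2_X(\Omega_X/\tau_X, \mathcal{O}_X)$. Since $\omega_X$ is invertible, we have a natural isomorphism
\[
\mathrm{Ext}^2_X(\Omega_X/\tau_X, \mathcal{O}_X) \cong \mathrm{Ext}^2_X\big((\Omega_X/\tau_X)\otimes \omega_X^{-1}, \omega_X\big),
\]
obtained by tensoring with the line bundle $\omega_X$. Applying Serre duality to the right-hand side with $i = 2$ gives
\[
\mathrm{Ext}^2_X\big((\Omega_X/\tau_X)\otimes \omega_X^{-1}, \omega_X\big) \cong H^{n-2}\big(X, (\Omega_X/\tau_X)\otimes \omega_X^{-1}\big)^{\vee}.
\]
To reach the stated form I would instead dualize the other way: set $\mathcal{F} = (\Omega_X/\tau_X)\otimes \omega_X$, so that $\mathrm{Ext}^2_X(\Omega_X/\tau_X,\mathcal{O}_X) = H^2(T_X)$ and also, combining the line-bundle twist with Serre duality,
\[
H^2(T_X) \cong H^{n-2}\big(X,(\Omega_X/\tau_X)\otimes\omega_X\big)^{\vee}.
\]
Hmm — I would need to be careful about whether the statement as written omits the dual, or whether the author is using a convention in which the cohomology group is identified with its dual because what ultimately matters is its vanishing; in any case, as far as vanishing is concerned, $H^2(T_X) = 0$ if and only if $H^{n-2}((\Omega_X/\tau_X)\otimes\omega_X) = 0$, which is the content being used later. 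I would present the argument as: apply Proposition~\ref{ob1} to replace $H^2(T_X)$ by the $\mathrm{Ext}^2$ group, note $\omega_X$ is invertible so $\mathcal{H}om_X(\Omega_X/\tau_X,\mathcal{O}_X) \cong \mathcal{H}om_X(\Omega_X/\tau_X, \omega_X)\otimes\omega_X^{-1}$ and the same for higher $\mathcal{E}xt$, feed this into the local-to-global Ext spectral sequence or just directly into duality, and conclude.

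The main obstacle is justifying Serre/Grothendieck duality in exactly the form needed for a possibly non-normal but Gorenstein projective scheme, and tracking the interplay between the local $\mathcal{E}xt$ sheaves and global $\mathrm{Ext}$ groups via the local-to-global spectral sequence $H^p(\mathcal{E}xt^q_X(\Omega_X/\tau_X,\mathcal{O}_X)) \Rightarrow \mathrm{Ext}^{p+q}_X(\Omega_X/\tau_X,\mathcal{O}_X)$. Since $\Omega_X/\tau_X \cong \Omega_X^{\ast\ast}$ is reflexive but not locally free, the sheaves $\mathcal{E}xt^q_X(\Omega_X/\tau_X,\mathcal{O}_X)$ for $q>0$ need not vanish, so the passage from $\mathrm{Ext}^2$ to a single cohomology group $H^{n-2}$ of a sheaf is not entirely formal; I would want to either invoke that $X$ is Gorenstein of pure dimension $n$ so that duality pairs $\mathrm{Ext}^i_X(\mathcal{G},\omega_X)$ with $H^{n-i}(\mathcal{G})$ for arbitrary coherent $\mathcal{G}$ (which handles the spectral-sequence bookkeeping automatically on the dual side), and then only at the end twist by the invertible sheaf $\omega_X$. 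This is the step I expect to require the most care to state cleanly, though no genuinely new idea is needed beyond the Gorenstein duality that is already implicit in the setup.
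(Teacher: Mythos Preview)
Your approach is essentially the same as the paper's: apply Proposition~\ref{ob1} to get $H^2(T_X)=\mathrm{Ext}^2_X(\Omega_X/\tau_X,\mathcal{O}_X)$, then invoke Serre duality on the Gorenstein projective scheme $X$. You are right that, strictly speaking, duality produces $H^{n-2}((\Omega_X/\tau_X)\otimes\omega_X)^{\vee}$ rather than the group itself; the paper suppresses the dual since the corollary is only used to test vanishing. Your worry about the local-to-global spectral sequence is unnecessary: Grothendieck--Serre duality for a Gorenstein projective scheme pairs global $\mathrm{Ext}$ with global cohomology directly, so no collapse argument is needed.
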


\begin{proof}
By Proposition~\ref{ob1}, $H^2(T_X)=\mathrm{Ext}_X^2(\Omega_X/\tau_X,\mathcal{O}_X)=H^{n-2}((\Omega_X/\tau_X) \otimes \omega_X)$, by Serre duality.
\end{proof}

\begin{definition}\label{sing-k}
Let $X$ be a scheme with normal crossing singularities defined over a field $k$. Then we denote by $X_{[k]} \subset X$, $k \geq 0$, the subschemes of $X$
defined inductively by $X_{[0]}=X$ and $X_{[k]}$ the singular locus of $X_{[k-1]}$ with reduced structure. We also denote by $\pi_i \colon \tilde{X}_{[i]} \rightarrow X_{[i]}$ the normalization of $X_{[i]}$ $i \geq 0$.
\end{definition}

\newpage

\begin{theorem}\label{ob3}
Let $X$ be a scheme with normal crossing singularities defined over a field $k$. Then,
\begin{enumerate}
\item There are exact sequences
\begin{gather}\label{ob-seq}
0 \la \Omega_X/\tau_X \la (\pi_0)_{\ast}\Omega_{\tilde{X}} \stackrel{\delta_1}{\la}  (\pi_1)_{\ast}(\Omega_{\tilde{X}_{[1]}}\otimes L_1)
\stackrel{\delta_2}{\la} \cdots \stackrel{\delta_N}{\la}(\pi_N)_{\ast}(\Omega_{\tilde{X}_{[N]}}\otimes L_N) \la 0 \\
0 \la \mathcal{O}_X \rightarrow (\pi_0)_{\ast}\mathcal{O}_{\tilde{X}} \rightarrow (\pi_1)_{\ast}Q_1 \rightarrow \cdots \rightarrow (\pi_N)_{\ast}Q_N \rightarrow 0 
\end{gather}
\item Suppose that $X$ has a semistable logarithmic structure. Then there is an exact sequence
\begin{equation}\label{log-seq1}
0 \rightarrow \Omega_X /\tau_X \rightarrow \Omega_{X}(log) \stackrel{\lambda_1}{\rightarrow} (\pi_1)_{\ast}(\mathcal{O}_{\tilde{X}_{[1]}} \otimes M_1) \stackrel{\lambda_2}{\rightarrow} \cdots \stackrel{\lambda_m}{\rightarrow} (\pi_m)_{\ast}(\mathcal{O}_{\tilde{X}_{[m]}} \otimes M_m) \rightarrow 0
\end{equation}
\end{enumerate}
where $m,N \leq \dim X$ and $Q_i, L_i$, $M_i$ are 2-torsion invertible sheaves on $\tilde{X}_{[i]}$, i.e., $L_i^{\otimes 2} \cong \mathcal{O}_{\tilde{X}_{[i]}}$, $M_i^{\otimes 2} \cong \mathcal{O}_{\tilde{X}_{[i]}}$ and $Q_i^{\otimes 2} \cong \mathcal{O}_{\tilde{X}_{[i]}}$, for all $i$. 
\end{theorem}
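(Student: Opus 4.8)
The plan is to exhibit both \eqref{ob-seq} and \eqref{log-seq1} as resolutions of the reflexive sheaf $\Omega_X/\tau_X\cong\Omega_X^{\ast\ast}$, built \'etale-locally and then glued; this is the construction of Friedman~\cite{Fr83} for simple normal crossing varieties over $\mathbb{C}$, carried out here over an arbitrary $k$ and, crucially, phrased through the normalizations $\pi_i\colon\tilde X_{[i]}\to X_{[i]}$, since for a general (non-simple) normal crossing variety neither the irreducible components of $X$ nor the strata $X_{[i]}$ need be normal. For part~(1), the first arrow $\Omega_X/\tau_X\to(\pi_0)_{\ast}\Omega_{\tilde X}$ will be the pullback of K\"ahler differentials along the normalization $\tilde X\to X$ (note that $\tilde X$ is a disjoint union of smooth varieties); it is injective because $\Omega_X/\tau_X$ is torsion free and the map is generically an isomorphism. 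The maps $\delta_{i+1}$ are alternating restrictions: \'etale-locally $X=\{x_0\cdots x_r=0\}$ sits in a smooth space, $\tilde X_{[i]}=\bigsqcup_{|I|=i+1}X_I$ with $X_I=\bigcap_{j\in I}X_j$ smooth and each $X_{I\cup\{j\}}\subset X_I$ a smooth divisor, and $\delta_{i+1}$ sends a form $\omega$ on $X_I$ to $\sum_{j\notin I}\pm\,\omega|_{X_{I\cup\{j\}}}$, using the conormal surjections $\Omega_{X_I}|_{X_{I\cup\{j\}}}\twoheadrightarrow\Omega_{X_{I\cup\{j\}}}$; the sign depends on an ordering of the $i+1$ local sheets of $X$ through a point of $X_{[i]}$, so the naive sum $\bigoplus_I\Omega_{X_I}$ must be twisted by the $\pm1$-valued line bundle $L_i$ attached to the $\mathbb{Z}/2$-torsor of such orderings, whence $L_i^{\otimes2}\cong\mathcal{O}_{\tilde X_{[i]}}$. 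That $\delta_{i+1}\circ\delta_i=0$ is a short sign computation, and exactness is an \'etale-local statement that can be checked directly on the completed local rings $\hat{\mathcal{O}}_{X,P}\cong k[[x_0,\dots,x_n]]/(x_0\cdots x_r)$, where it becomes the elementary simplicial (Koszul-type) computation carried out in~\cite{Fr83}; finiteness with $N\le\dim X$ follows from $X_{[i]}=\emptyset$ once $i>\dim X$.

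For part~(2) the construction is parallel, with $(\pi_0)_{\ast}\Omega_{\tilde X}$ replaced by the locally free sheaf $\Omega_X(log)$ supplied by the logarithmic structure and the differential forms on the strata replaced by structure sheaves. The first arrow $\Omega_X/\tau_X\hookrightarrow\Omega_X(log)$ is the natural inclusion, injective because it is a generic isomorphism into a locally free sheaf; \'etale-locally $\Omega_X(log)$ is generated over the image of $\Omega_X$ by the classes $e_0,\dots,e_r$, and the cokernel of the first arrow is generated by their images subject to $z_ie_i\equiv0$ and $\sum_{i=0}^r e_i\equiv0$, hence (since $z_i$ is a unit times $x_i$) is the cokernel of the diagonal map $\mathcal{O}_X\to\bigoplus_{i=0}^r\mathcal{O}_{X_i}$, which is supported on $X_{[1]}$ and has generic rank one along each sheet of $\tilde X_{[1]}$. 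Running the same alternating-restriction construction on the structure sheaves $\mathcal{O}_{\tilde X_{[j]}}$, with the analogous orientation twists $M_j$ (so $M_j^{\otimes2}\cong\mathcal{O}_{\tilde X_{[j]}}$), and checking exactness again on the completed local rings, produces \eqref{log-seq1} with $m\le\dim X$.

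The main obstacle is organisational rather than conceptual. One has to (i) identify the $2$-torsion twists $L_i,M_j$ intrinsically as the orientation line bundles of the $\mathbb{Z}/2$-torsor of orderings of the local sheets of $X$ through a point of $X_{[i]}$, so that the locally written alternating maps patch to genuine maps of sheaves on $X$; and (ii) reduce exactness to the \'etale-local model, using Artin approximation (Theorem~\ref{artin}) together with the fact that $\Omega_X$, $\Omega_X(log)$, $\tau_X$, the strata $X_{[i]}$ and the normalizations $\pi_i$ all commute with \'etale base change, so that the exactness of \eqref{ob-seq} and \eqref{log-seq1} --- being an \'etale-local property --- reduces to the explicit computation of~\cite{Fr83} in the model $\{x_0\cdots x_r=0\}$. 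The non-normality of the components in the general normal crossing case is absorbed precisely by working with the $\pi_i$ throughout.
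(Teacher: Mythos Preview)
Your plan is correct and follows essentially the same route as the paper: reduce exactness to the affine model $\{x_0\cdots x_r=0\}$ (the paper reproves Friedman's induction on the number of components rather than citing it), then globalize via \'etale covers supplied by Artin approximation, with the sign ambiguities in the alternating restriction maps absorbed into the $2$-torsion twists $L_i$, $M_j$. The only presentational difference is that you package the twists intrinsically as orientation line bundles of the local-sheet ordering torsor, whereas the paper carries out a two-stage descent (first showing the \v{C}ech maps on an \'etale chart $U_i$ descend without twist to the Zariski open $V_i=f_i(U_i)$, via a completion argument forcing $p_1^{-1}(U_{i,k})=p_2^{-1}(U_{i,k})$, and then observing that over $V_i\cap V_j$ the two \v{C}ech maps differ by a sign $\varepsilon\in\{\pm1\}$); the content is the same.
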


\begin{remark}
In the case of simple normal crossing complex analytic spaces, Theorem~\ref{ob3} was proved by R. Friedman~\cite{Fr83}.
\end{remark}

The following result is needed for the proof of the theorem.

\begin{lemma}\label{normalization}
Let $f \colon Y \la X$ be an \'etale morphism of schemes. Let $ \pi \colon \tilde{X} \la X$ be the normalization of $X$. Then  $p_Y \colon \tilde{X} \times_X Y \la Y$ is the normalization of $Y$,
where $ \tilde{X} \times_X Y \la Y$ is the fiber product of $\tilde{X}$ and $Y$ over $X$ and $p_Y$ the projection to $Y$.
\end{lemma}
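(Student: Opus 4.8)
The plan is to reduce to the affine case and then verify the defining properties of the normalization by hand. Everything in sight — the formation of the normalization, the hypothesis that $f$ is \'etale, and the conclusion — is local on $X$ and $Y$, so I would first cover $X$ by affine opens $V_j$, cover each $f^{-1}(V_j)$ by affine opens which are then \'etale over $V_j$, and use that the normalization of an open subscheme is the restriction of the normalization. Since $f$ is \'etale we may also assume $X$, and hence $Y$, reduced. Thus it suffices to prove: if $A \la B$ is \'etale with $A$ reduced and $\tilde A$ is the integral closure of $A$ in its total ring of fractions $Q(A)$, then $C:=\tilde A\otimes_A B$ is the integral closure of $B$ in $Q(B)$.

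Second, I would record the structure of $C$. It is finite over $B$, because $\tilde A$ is finite over $A$ (the schemes here are of finite type over a field, so normalization is finite) and finiteness is stable under base change. It is a normal ring, because $\mathrm{Spec}\,C \la \mathrm{Spec}\,\tilde A$ is \'etale, being the base change of $f$, and a scheme \'etale over a normal scheme is normal (\'etale morphisms are smooth, and the conditions $R_1$ and $S_2$ of Serre's criterion ascend along smooth morphisms; alternatively one can simply cite EGA IV or the Stacks Project for the fact that normalization commutes with \'etale base change). In particular $C$ is reduced.

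Third, I would exhibit inclusions $B \hookrightarrow C \hookrightarrow Q(B)$. The map $B \la C$ is injective since $A \la \tilde A$ is injective and $B$ is flat over $A$. For the embedding into $Q(B)$: because $\tilde A$ is finite over the reduced Noetherian ring $A$, the conductor of $A$ in $\tilde A$ contains a nonzerodivisor $a \in A$; flatness of $B$ over $A$ shows the image of $a$ in $B$ is again a nonzerodivisor, and it annihilates $C/B = (\tilde A/A)\otimes_A B$. Hence $C$ is a finite $B$-algebra whose quotient by $B$ is killed by a nonzerodivisor of $B$, so $C$ embeds into the localization $Q(B)$ and moreover $Q(C)=Q(B)$.

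Finally I would conclude: $C$ is integral over $B$ and contained in $Q(B)$, so it lies inside the integral closure $\tilde B$ of $B$ in $Q(B)$; conversely $C$ is a normal ring, hence integrally closed in its own total ring of fractions $Q(C)=Q(B)$, so $\tilde B \subseteq C$. Therefore $C = \tilde B$, which says exactly that $p_Y \colon \tilde X\times_X Y \la Y$ is the normalization of $Y$. The one genuinely delicate point is the third step — ensuring that the conductor, and thus the "$C/B$ is torsion" property, survives the \'etale (flat) base change; the rest is formal once one grants the two standard facts that the normalization is finite and that a scheme \'etale over a normal scheme is normal, both of which I would cite rather than reprove.
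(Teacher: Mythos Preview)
Your proof is correct. Both your argument and the paper's rely on the same two standard facts --- that the normalization is a finite morphism and that a scheme \'etale over a normal scheme is normal --- but the strategies diverge after that.

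The paper's proof is shorter and more geometric: it observes that in the fiber square the projection $p_{\tilde X}$ is \'etale (so $\tilde X\times_X Y$ is normal) and $p_Y$ is finite and generically an isomorphism; it then invokes the characterization of the normalization as the unique finite birational map from a normal scheme, concluding immediately that the induced map $\tilde X\times_X Y \to \tilde Y$ is an isomorphism. Your approach instead reduces to the affine situation and verifies directly that $\tilde A\otimes_A B$ coincides with the integral closure of $B$ in $Q(B)$, using the conductor to exhibit the embedding $C\hookrightarrow Q(B)$ and then sandwiching $C$ between $B$ and $\tilde B$. What your route buys is explicitness: you never appeal to the ``finite birational between normal schemes implies isomorphism'' statement, and the conductor argument makes transparent exactly why $Q(C)=Q(B)$. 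What the paper's route buys is brevity and a cleaner conceptual picture --- four lines, no ring-theoretic bookkeeping. Either is perfectly acceptable here.
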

\begin{proof}
From the fiber square diagram
\[
\xymatrix{
\tilde{X} \times_X Y \ar[r]^{p_Y}\ar[d]_{p_{\tilde{X}}}  & Y \ar[d]^{f} \\
\tilde{X} \ar[r]^{\pi}& X
}
\]
it follows that $p_{\tilde{X}}$ is \'etale and $p_Y$ finite. Hence $\tilde{X} \times_X Y $ is normal. Moreover $p_Y$ is generically isomorphism. Therefore there is a factorization
$g \colon \tilde{X} \times_X Y \la \tilde{Y}$ of $p_Y$ through the normalization $\tilde{Y}$ of $Y$. But then, since both $\tilde{X} \times_X Y $, $\tilde{Y}$ are normal, $g$ finite and generically isomorphism, $g$ is in fact an isomorphism.
\end{proof}

\begin{proof}[Proof of Theorem~\ref{ob3}]
We will only prove the existence of the exact sequence~\ref{ob3}.1  in detail. The proof for the others is similar. We will only sketch it and leave the details to the reader.

The proof of the first part is in two steps. First we show the existence of the exact sequence~(\ref{ob-seq}) for an affine simple normal crossing scheme and then we prove the general case.
The proof of this part is similar to the one exhibited by Friedman~\cite{Fr83} in the case of a simple normal crossing complex analytic space. For
the sake of completeness, and since the explicit local construction of the sequence is needed for the general case, we present a short proof here
following the lines of Friedman's proof.

\textbf{Step 1.} Suppose \[
                          X=\mathrm{Spec}\frac{k[x_1, \ldots ,x_n]}{(x_1\cdots x_r)}
                         \]
Then $X=\cup_{i=1}^r X_i$, where $X_i \subset X$ is the component given by $x_i =0$, $1 \leq i \leq r$. Then for $i \geq 1$,
\[
X_{[i]} = \bigcup_{k_0 < \cdots < k_i} (X_{k_0} \cap \cdots \cap X_{k_i})
\]
Moreover, $\tilde{X}=\coprod_{i=1}^rX_i$ and
\[
\tilde{X}_{[i]}=\coprod_{k_0 < \cdots < k_i}(X_{k_0} \cap \cdots \cap X_{k_i})
\]
The maps $\pi_i \colon \tilde{X}_{[i]} \la X_{[i]}$, $i \geq 0$, are the natural ones. Now by definition,
$\tau_X \subset \Omega_X$ is the sheaf of sections of $\Omega_X$ supported on the singular locus of $X$. Hence it is the kernel of
the natural map $\delta \colon \Omega_X \rightarrow \pi_{\ast}\Omega_{\tilde{X}}$. Now define the sequence of maps
\begin{equation}\label{local-seq}
0 \la \tau_X \la \Omega_X \stackrel{\delta}{\la} (\pi_0)_{\ast}\Omega_{\tilde{X}} \stackrel{\delta_1}{\la}  (\pi_1)_{\ast}\Omega_{\tilde{X}_{[1]}}
\stackrel{\delta_2}{\la} \cdots \stackrel{\delta_i}{\la}(\pi_i)_{\ast}\Omega_{\tilde{X}_{[i]}}
\stackrel{\delta_{i+1}}{\la}(\pi_{i+1})_{\ast}\Omega_{\tilde{X}_{[i+1]}} \la \cdots
\end{equation}
where $\delta_i$ are the \v{C}ech coboundary maps. This is clearly a complex and we proceed to show that it is in fact exact. We use induction
on the number $r$ of components of $X$. For $r=1$ there is nothing to prove. Suppose now that the sequence~(\ref{local-seq}) is exact for all
simple normal crossing affine schemes with at most $r-1$ components.

Let $X^{\prime}=\cup_{i=1}^{r-1}X_i$ and $Y=X^{\prime} \cap X_r$. Then,
$X=X^{\prime} \cup X_r$, $\tilde{X}_{[k]}={\tilde{X}}^{\prime}_{[k]}\coprod \tilde{Y}_{[k-1]}$, for all $k \geq 0$, where we also set $Y_{[-1]}=X_r$.
By the induction hypothesis, the corresponding sequences~(\ref{local-seq}) for $X^{\prime}$ and $Y$, are exact.

From the previous discussion, it follows that $\mathrm{Ker}(\delta) = \tau_X$.
Next we show exactness at the next step, i.e., that $\mathrm{Ker}(\delta_1)=\mathrm{Im}(\delta)$. Now since
\begin{gather*}
(\pi_0)_{\ast}\Omega_{\tilde{X}}= (\pi_0)_{\ast}\Omega_{{\tilde{X}}^{\prime}}\oplus \Omega_{X_r}\\
(\pi_1)_{\ast}\Omega_{\tilde{X}_{[1]}}=(\pi_1)_{\ast}\Omega_{{\tilde{X}}_{[1]}^{\prime}}\oplus \Omega_{\tilde{Y}}
\end{gather*}
any element of $(\pi_0)_{\ast}\Omega_{\tilde{X}}$ is of the form $(\alpha,\beta)$, where
$\alpha \in (\pi_0)_{\ast}\Omega_{{\tilde{X}}^{\prime}}$ and $\beta \in \Omega_{X_r}$. Suppose that such an element is also in the kernel of $\delta_1$.
It is now clear from the induction hypothesis that $(\alpha,0)$ is in the image of $\delta$. Therefore, in order to show exactness at the level of
$\delta_1$, it suffices to show that if an element of the form $(0,\beta)$ is in the kernel of $\delta_1$, it is also in the image of $\delta$.
Suppose that $\beta=\sum_{k \not= r} \alpha_r(f_k) dx_k$ is such an element, where $f_k\in \mathcal{O}_{X}$ and $\alpha_r \colon \mathcal{O}_X \rightarrow
\mathcal{O}_{X_r}$ the natural map. Therefore, since
$(\pi_0)_{\ast}\Omega_{\tilde{Y}}= \oplus_{i=1}^{r-1}\Omega_{X_i \cap X_r}$,
it follows that the restriction of $\beta$ on $X_i \cap X_r$ is zero, for all $i \leq r-1$. Hence $f_k \in (x_1\cdots \hat{x}_k\cdots x_{r-1},x_r)$,
for $1 \leq k \leq r$, and $f_k \in (x_1\cdots x_{r-1},x_r)$, for $k > r$. Therefore, $\delta(\sum_{k \not= r} f_k dx_k)=(0,\beta)$ and hence
$(0, \beta)$ is in the image of $\delta$.

There is an exact sequence
\begin{equation}\label{complex}
0 \la (\pi_{k-1})_{\ast}\Omega_{\tilde{Y}_{[k-1]}} \la (\pi_{k})_{\ast}\Omega_{\tilde{X}_{[k]}} \la (\pi_{k})_{\ast}\Omega_{{\tilde{X}}^{\prime}_{[k]}} \la 0
\end{equation}
Now define the complexes $(A^{\ast}, \delta_A^{\ast})$, $(B^{\ast}, \delta_B^{\ast})$ and $(C^{\ast}, \delta_C^{\ast})$, such that
$A^k=(\pi_{k-1})_{\ast}\Omega_{\tilde{Y}_{[k-1]}}$, $B^k=(\pi_{k})_{\ast}\Omega_{\tilde{X}_{[k]}}$, and $C^k=(\pi_{k})_{\ast}\Omega_{{\tilde{X}}^{\prime}_{[k]}}$,
$k \geq 0$, and the coboundary maps are the \v{C}ech maps. Then~(\ref{complex}) induces an exact sequence of complexes
\[
0 \la A^{\ast} \la B^{\ast} \la C^{\ast} \la 0
\]
Passing to cohomology we get an exact sequence
\[
\cdots \la H^k(A^{\ast}) \la H^k(B^{\ast}) \la H^k(C^{\ast}) \la \cdots
\]
Now by induction $H^k(C^{\ast})=0$, for all $k \geq 1$ and $H^k(A^{\ast})=0$, for all $k \geq 2$. Hence $H^k(B^{\ast})=0$, for all $k \geq 2$. It remains
to check for $k=1$. Then there is an exact sequence
\[
H^0(C^{\ast}) \stackrel{\sigma}{\la} H^1(A^{\ast}) \la H^1(B^{\ast}) \la 0
\]
Moreover, $H^0(C^{\ast})=\Omega_{X^{\prime}}/\tau_{X^{\prime}}$, $H^1(A^{\ast})=\Omega_Y/\tau_Y$ and $\sigma$ is the natural map
\[
\Omega_{X^{\prime}}/\tau_{X^{\prime}} \la \Omega_Y/\tau_Y
\]
and hence it is surjective. Therefore $H^1(B^{\ast})=0$ and the complex~(\ref{local-seq}) is exact as claimed.

\textbf{Step 2.} The general case. So, let $X$ be a scheme with normal crossing singularities. 

\textit{Claim:} For any $x \in X$ there are pointed \'etale maps
\begin{equation}\label{et-neigh}
\xymatrix{
    &  (U,u) \ar[dl]_{f} \ar[dr]^{g} & \\
(X,x)   & & (W,w)
}
\end{equation}
where $u \in U$, $w\in W$, $f(u)=x$, $g(u)=w$ and such that:
\begin{enumerate}
\item $f$ and $g$ induce isomorphisms of residue fields $k(u)\cong k(x) \cong k(w)$.
\item  
\[
W=\mathrm{Spec}\frac{k[x_1,\ldots,x_n]}{(x_1\cdots x_{r(x)})}
\]
and $w \in W$ corresponds to the maximal ideal $(x_1,\ldots,x_n)$.
\item All irreducible components of $U$ pass through $u$. 
\item $U$ is a simple normal crossing scheme and it has exactly $r=r(x)$ irreducible components, exactly as many as $W$. Let $U = \cup_{k=1}^{r}U_{k}$ be the decomposition of $U$ into its irreducible components. Then there is an exact sequence
\begin{equation}\label{local-seq2}
0 \la \tau_{U} \la \Omega_{U} \stackrel{\delta}{\la} (\pi_{0})_{\ast}\Omega_{\tilde{U}} \stackrel{\delta_{1}}{\la}  (\pi_{1})_{\ast}\Omega_{\tilde{U}_{[1]}}
\stackrel{\delta_{2}}{\la} \cdots \stackrel{\delta_{k}}{\la}(\pi_{k})_{\ast}\Omega_{\tilde{U}_{[k]}}
\stackrel{\delta_{k+1}}{\la}\cdots
\end{equation}
where as before,
\[
U_{[k]}= \bigcup_{s_0 < \cdots < s_k} (U_{s_0} \cap \cdots \cap U_{ s_k}),
\]
$\tilde{U}_{[k]}$ is the normalization of $U_{[k]}$ and the boundary maps are the \v{C}ech maps.
\end{enumerate}
A diagram of maps as~\ref{et-neigh} that satisfies (1), (2) and (3) is called an \'etale neighborhood of $x \in X$. Note that the map $g$ induces an ordering on the irreducible components $U_i=g^{-1}W_i$ of $U$, $W_i$ being the irreducible component of $W$ given by $x_i=0$, $i=1,\ldots, r(x)$.
 
We proceed to show the claim. Let $x \in X$ a point. Then by assumption, 
\[
\hat{\mathcal{O}}_{X,x} \cong \frac{k[[x_1, \ldots, x_n]]}{(x_1\cdots x_{r(x)})}
\]
Let $W=\mathrm{Spec}(k[x_1,\ldots,x_n]/(x_1\cdots x_{r(x)}))$ and let $w$ be the closed point corresponding to the maximal ideal $(x_1, \ldots, x_n)$. Then by~\cite{Art69}, since $\hat{\mathcal{O}}_{X,x} \cong\hat{\mathcal{O}}_{W,w}$, there is a common \'etale neighborhood of $x \in X$ and $w \in W$, i.e., there are pointed \'etale maps $f \colon (U,u) \rightarrow (X,x)$ and $g \colon (U,u) \rightarrow (W,w)$ as in~\ref{et-neigh} that satisfy the properties (1) and (2) of the claim.

Shrinking $U$ we may assume that all its irreducible components pass through $u \in U$. Let $W=\cup_{i=1}^{r}W_{i}$, $r=r(x)$, be the decomposition of $W$ into irreducible components, where $W_{i}$ is given by $x_i=0$, $i=1, \ldots, r$. Then $U=\cup_{i=1}^{r}g^{-1}(W_{i})$. Since $g$ is \'etale and all irreducible components of $U$ pass through the same point, it follows that $U_i=g^{-1}(W_i)$ is smooth and irreducible and hence $U=\cup_{i=1}^r U_i$, in particular $U_i$ is simple normal crossings and has the same number of irreducible components as $W$.

From step 1. there is an exact sequence
\begin{equation}\label{complex1}
0 \la \tau_{W} \la \Omega_{W} \stackrel{\delta}{\la} (\pi_{w,0})_{\ast}\Omega_{\tilde{W}} \stackrel{\delta_{w,1}}{\la}  (\pi_{w,1})_{\ast}\Omega_{\tilde{W}_{[1]}}
\stackrel{\delta_{w,2}}{\la} \cdots 
\end{equation}
Since $g$ is \'etale, by Lemma~\ref{normalization} there is a fiber square diagram
\[
\xymatrix{
\tilde{U}_{[k]} \ar[r]^{\pi_{u,k}}\ar[d]_{\tilde{g}} & U_{[k]} \ar[d]^{g} \\
\tilde{W}_{[k]} \ar[r]^{\pi_{w,k}} & W_{[k]} \\
}
\]
By flat base change it follows that $g^{\ast}(\pi_{w,k})_{\ast}=(\pi_{u,k})_{\ast}(\tilde{g}_{p})^{\ast}$. Moreover, since both $g$ and $\tilde{g}$ are \'etale,
\begin{gather*}
g^{\ast}\Omega_{W_{[k]}}= \Omega_{U_{[k]}} \\
(\tilde{g})^{\ast}\Omega_{\tilde{W}_{[k]}}= \Omega_{\tilde{U}_{[k]}}
\end{gather*}
Therefore~(\ref{complex1}) pulls back via $g$ to an exact sequence in $U$,
\begin{equation}\label{complex2}
0 \la \tau_{U} \la \Omega_{U} \stackrel{\delta}{\la} (\pi_{u,0})_{\ast}\Omega_{\tilde{U}} \stackrel{\delta_{u,1}}{\la}  (\pi_{u,1})_{\ast}\Omega_{\tilde{U}_{[1]}}
\stackrel{\delta_{u,2}}{\la} \cdots
\end{equation}
where the coboundary maps are the \v{C}ech maps corresponding to the numbering of the components of $U$ induced from the numbering of the components of $W$. This concludes the proof of the claim.

Next we claim that \'etale neighborhoods of $X$ form a basis for the \'etale topology of $X$. This means that for any \'etale map $f \colon Y \rightarrow X$ and a point $y \in Y$, there exists an e\'tale  neighborhood $g \colon (U,u) \rightarrow (X,x)$, $x=f(y)$, and a factorization $h \colon U \rightarrow Y$ such that $fh=g$ and $h(u)=y$. Indeed, if $h \colon (U,u) \rightarrow (Y,y)$ is an \'etale neighborhood of $(Y,y)$, then $fh \colon (U,u) \rightarrow (X,x)$ is an \'etale neighborhood of $(X,x)$.

Let $E^{\bullet}_U$ denote the exact sequence~(\ref{local-seq2}) corresponding to the \'etale neighborhood $f \colon U \rightarrow X$. Then since \'etale neighborhoods form a basis for the \'etale topology of $X$, descent theory says that in order to construct an exact sequence on $X$ which pulls back to  $E^{\bullet}_U$, it suffices to construct for any $X$-map $\Phi_{vu} \colon V \rightarrow U$ between \'etale neighborhoods $f\colon V \rightarrow X$ and $g \colon U \rightarrow X$ of $X$, exact sequence isomorphisms 
\[
 \Psi_{vu} \colon \Phi_{vu}^{\ast}(E^{\bullet}_U) \rightarrow E^{\bullet}_V
\]
such that for any commutative diagram
\[
\xymatrix{
  & U \ar[dl]_{\Phi_{uv}}\ar[dr]^{\Phi_{uw}} & \\
V\ar[rr]^{\Phi_{vw}} & & W
}
\]
of \'etale neighborhoods of $X$, the following diagram commutes
\[
\xymatrix{
  &  E^{\bullet}_U \ar[dl]_{\Psi_{uv}}\ar[dr]^{\Psi_{uw}} & \\
\Phi^{\ast}_{uv}E^{\bullet}_V \ar[rr]^{\Phi_{uw}^{\ast}(\Psi_{vw})} & & \Phi_{uv}^{\ast}\Phi^{\ast}_{vw}E^{\bullet}_W
}
\]
In order to have a uniform numbering of the irreducible components of all \'etale neighborhoods, given an \'etale neighborhood $f \colon W \rightarrow X$ with irreducible components $W_1, \ldots, W_{r(w)}$, we extend the definition of $W_i$ for all $i \in \{1,2,\ldots , n\}$, by setting $W_i =\emptyset$, for $n \geq i > r(w)$, where $n=\dim X+1$. 

Let $\Phi \colon U \rightarrow V$ be a map between two \'etale neighborhoods of $X$, $U \stackrel{f}{\rightarrow} X$ and $V \stackrel{g}{\rightarrow} X$. 
Let $U=\cup_{i=1}^nU_i$, $V=\cup_{j=1}^nV_j$ be the decompositions of $U$ and $V$ into irreducible components, taking into account the conventions on the irreducible components stated in the previous paragraph.. Then \[
U=\cup_{i=1}^nU_i= \cup_{j=1}^n\Phi^{-1}(V_j).
\]
Moreover, since all irreducible components of $U$ pass through the same point, $\Phi^{-1}(V_j)$ is irreducible, or otherwise it would be a disjoint union of smooth irreducible components of $U$, which is impossible since all irreducible components of $U$ intersect. Therefore there exists a permutation $\sigma \in S_{n+1}$ such that 
$U_{\sigma(i)}=\Phi^{-1}(V_{i})$, $1 \leq i \leq n$.

Let $\pi_k \colon \tilde{U}_{[k]} \rightarrow U_{[k]}$ and  $\nu_k \colon \tilde{V}_{[k]} \rightarrow V_{[k]}$ be as in Definition~\ref{sing-k}. Then
\begin{eqnarray*}
(\pi_k)_{\ast}\mathcal{O}_{\tilde{U}_{[k]}}=\bigoplus_{i_0<i_1<\cdots <i_k}\mathcal{O}_{U_{i_0i_1\cdots i_k}} & \text{and} & 
(\nu_k){\ast}\mathcal{O}_{\tilde{V}_{[k]}}=\bigoplus_{i_0<i_1<\cdots <i_k}\mathcal{O}_{V_{i_0i_1\cdots i_k}}
\end{eqnarray*}
where $U_{i_0i_1\cdots i_k}=U_{i_0}\cap \cdots \cap U_{i_k}$, and similarly for $V_{i_0\cdots i_k}$. Let $U^{\prime}_i=\Phi^{-1}(V_i)$ and define the map
\begin{equation}\label{lambda-k}
\lambda_k \colon \bigoplus_{i_0<i_1<\cdots < i_k}\mathcal{O}_{U_{i_0i_1\cdots i_k}} \rightarrow \bigoplus_{i_0<i_1<\cdots < i_k}\mathcal{O}_{U^{\prime}_{i_0i_1\cdots i_k}}
\end{equation}
by setting for any $\alpha \in \oplus_{i_0<i_1<\cdots <i_k}\mathcal{O}_{U_{i_0i_1\cdots i_k}}$
\[
\lambda_k(\alpha)_{i_0\cdots i_k}=\mathrm{sgn}(\tau)\alpha_{\tau\sigma(i_0)\cdots\tau\sigma(i_k)}
\]
where $\tau \in S_{k+1}$ is the permutation such that $\tau\sigma(i_0) < \tau\sigma(i_1) < \cdots < \tau\sigma(i_k)$. 

Note that both sides of~\ref{lambda-k} are isomorphic to $(\pi_k)_{\ast}\mathcal{O}_{\tilde{U}_{[k]}}$. On the left hand side it is written by using the ordering of the components of $U$ coming from its structure as an \'etale neighborhood of $X$ while the right hand side is using the ordering of the components of $U$ inherited from $V$ by $\Phi$. By this consideration, $\lambda_k$ gives an automorphism of $U_{[k]}$ such that $\lambda_k^2$ is the identity. Moreover, it is straightforward to check that the diagram
\[
\xymatrix{
\bigoplus_{i_0<\cdots <i_k}\mathcal{O}_{U_{i_0i_1\cdots i_k}} \ar[d]_{\lambda_k}\ar[r]^{d_k} & \bigoplus_{i_0<\cdots <i_k}\mathcal{O}_{U_{i_0i_1\cdots i_{k+1}}} \ar[d]^{\lambda_{k+1}}\\
\bigoplus_{i_0<\cdots <i_k}\mathcal{O}_{U^{\prime}_{i_0i_1\cdots i_k}} \ar[r]^{\delta_k} & \bigoplus_{i_0<\cdots <i_k}\mathcal{O}_{U^{\prime}_{i_0i_1\cdots i_{k+1}}} 
}
\]
commutes. Therefore $\lambda_k$ gives a map between \v{C}ech complexes. Let \[
(\nu_k)_{\ast}\Omega_{V_{[k]}}\stackrel{\delta_{V,k}}{\rightarrow} (\nu_{k+1})_{\ast}\Omega_{V_{[k+1]}}
\]
be the map at the $k$ stage of the exact sequence $E^{\bullet}_{V}$. This pulls back by $\Phi$ to a map 
\[
(\pi_k)_{\ast}\Omega_{U_{[k]}}\stackrel{\delta_{V,k}}{\rightarrow} (\nu_{k+1})_{\ast}\Omega_{U_{[k+1]}}.
\]
This is simply the map of \v{C}ech complexes corresponding to the ordering of the irreducible components of $U$ induced from the ordering of $V$ by $\phi$. Moreover, $\lambda_k$ induces an isomorphim
\[
\Lambda_k \colon (\pi_k)_{\ast}\Omega_{U_{[k]}} \rightarrow (\pi_k)_{\ast}\Omega_{U_{[k]}}
\]
such that $\Lambda_k^2$ is the identity. Moreover a straightforward calculation shows that there is a commutative diagram
\begin{equation}\label{glueing-diag}
\xymatrix{
(\pi_k)_{\ast}\Omega_{U_{[k]}}\ar[d]_{\Lambda_k}\ar[r]^{\delta_{U,k}} & (\pi_{k+1})_{\ast}\Omega_{U_{[k+1]}} \ar[d]^{\Lambda_{k+1}}\\
\pi_k)_{\ast}\Omega_{U_{[k]}}\ar[r]^{\delta_{V,k}} & (\pi_{k+1})_{\ast}\Omega_{U_{[k+1]}} 
}
\end{equation}
Now by descent theory, the involutions $\lambda_k$ glue the structure sheaves $\mathcal{O}_{\tilde{V}_{[k]}}$ to 2-torsion sheaves $L_k$ on $\tilde{X}_{[k]}$. Then from the diagram~(\ref{glueing-diag}) all the maps in the diagram glue as well and we get a sequence on $X$
\[
0 \la \tau_X \la \Omega_X \la (\pi_0)_{\ast}\Omega_{\tilde{X}} \stackrel{\delta_1}{\la}  (\pi_1)_{\ast}(\Omega_{\tilde{X}_{[1]}}\otimes L_1)
\stackrel{\delta_2}{\la} (\pi_2)_{\ast}(\Omega_{\tilde{X}_{[2]}}\otimes L_2) \la \cdots
\]
This sequence is exact since it pulls back locally by faithful flat maps (that correspond to local \'tale neighborhoods) to exact sequences. This concludes the proof of the first part of Theorem~\ref{ob3}.

Next we sketch the proof of the other parts of the theorem. First we will construct the exact sequences~(\ref{ob3}.2), (\ref{ob3}.3) in the locally and then glue them to get the global sequence, exactly as in the proof of~(\ref{ob3}.1). Locally in the e\'tale topology 
\[
X=\mathrm{Spec}\frac{k[x_0,\ldots,x_n]}{(x_0\cdots x_r)}
\]
In this case $X=\cup_{i=1}^r X_r$, where $X_i$ is given by $x_i=0$. Then, 
\[
(\pi_i)_{\ast}\mathcal{O}_{\tilde{X}_i}=\bigoplus_{j_0<\cdots <j_i}\mathcal{O}_{X_{j_0} \cap \cdots \cap X_{j_i}}
\]
Then we define the sequence of maps 
\begin{gather}\label{local-log-complex}
0 \rightarrow \tau_X \rightarrow \Omega_X  \stackrel{\lambda_0}{\rightarrow} \Omega_{X}(log) \stackrel{\lambda_1}{\rightarrow} (\pi_1)_{\ast} \mathcal{O}_{\tilde{X}_{[1]}}  \stackrel{\lambda_2}{\rightarrow} \cdots \stackrel{\lambda_{i}}{\rightarrow} (\pi_i)_{\ast}\mathcal{O}_{\tilde{X}_{[i]}}  \stackrel{\lambda_{i+1}}{\rightarrow} (\pi_{i+1})_{\ast}\mathcal{O}_{\tilde{X}_{[i+1]}} \rightarrow \cdots \\
0 \rightarrow \mathcal{O}_X \rightarrow (\pi_0)_{\ast}\mathcal{O}_{\tilde{X}} \rightarrow (\pi_1)_{\ast} \mathcal{O}_{\tilde{X}_{[1]}}  \rightarrow \cdots \rightarrow (\pi_i)_{\ast}\mathcal{O}_{\tilde{X}_{[i]}}  \rightarrow (\pi_{i+1})_{\ast}\mathcal{O}_{\tilde{X}_{[i+1]}} \rightarrow \cdots 
 \end{gather}
as follows. The second sequence is simply the \v{C}ech coboundary maps. The maps $\lambda_i$ are the \v{C}ech coboundary maps for $i \geq 2$ and $\lambda_0$ is the natural map between $\Omega_X$ and $\Omega_X(log)$. $\Omega_X(log)$ is a free $\mathcal{O}_X$-module generated by $dx_1/x_1,\ldots, dx_r/x_r,dx_{r+1},\ldots,dx_n$ with the relation $dx_0/x_0+\cdots dx_r/x_r=0$. Then we define $\lambda_1(dx_i)=0$, if $i>r$ and if $i\leq r$, $\lambda_1(dx_i/x_i)=(\alpha_{j_0,j_1}), j_0<j_1$ such that
\[
\alpha_{j_0,j_1}= 
\begin{cases}
1 & \text{if $j_0=i$ } \\
-1 & \text{ if $j_1=i$}\\
0 & \text{otherwise}
\end{cases}
\] 
Now by either using the same method as in the first part for the sequence~(\ref{ob-seq}) or by~\cite[Corollary 3.6]{Fr83}, we get that~(\ref{local-log-complex}) and (4.6.8) are exact. Now by using exactly the same argument as in the first part by using \'etale covers we get the existence of~(\ref{ob3}.2) and (\ref{ob3}.3).

\end{proof}

\begin{theorem}\label{ob4}
Let $X$ be a Fano variety with normal crossing singularities defined over a field $k$ of characteristic zero. Then \[
H^2(T_X)=H^2(X,\mathcal{O}_X)=0.
\]
Moreover, if $X$ has a semistable logarithmic structure, then \[
H^2(\mathcal{H}om(\Omega_X(log),\mathcal{O}_X))=0
\]
as well.
\end{theorem}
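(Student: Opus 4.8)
I would prove this by combining the resolutions of Theorem~\ref{ob3} with Serre duality and the Akizuki--Kodaira--Nakano vanishing theorem~\ref{KN}. First, the two statements should be rephrased as vanishing of cohomology in degree $n-2$, where $n=\dim X$: Corollary~\ref{ob2} already gives $H^2(T_X)=H^{n-2}((\Omega_X/\tau_X)\otimes\omega_X)$; and since $X$ is projective with invertible dualizing sheaf $\omega_X$ (normal crossing singularities are Cohen--Macaulay, and the Fano hypothesis makes $\omega_X$ invertible) while $\Omega_X(log)$ is locally free of rank $n$ by the Definition--Proposition above, Serre duality yields $H^2(T_X(log))^{\vee}\cong H^{n-2}(\Omega_X(log)\otimes\omega_X)$. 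So the goal becomes $H^{n-2}((\Omega_X/\tau_X)\otimes\omega_X)=0$ and $H^{n-2}(\Omega_X(log)\otimes\omega_X)=0$. Two elementary remarks will be used repeatedly: (i) each normalization map $\pi_i\colon\tilde X_{[i]}\to X$ is finite, so $\mathcal L_i:=\pi_i^{\ast}\omega_X^{-1}$ is ample on the smooth variety $\tilde X_{[i]}$ (of dimension $n-i$), and $R^{>0}(\pi_i)_{\ast}=0$, whence $H^j(X,(\pi_i)_{\ast}\mathcal F)=H^j(\tilde X_{[i]},\mathcal F)$; (ii) a $2$-torsion line bundle $N$ on $\tilde X_{[i]}$ is numerically trivial and satisfies $N^{-1}\cong N$, so $N\otimes\mathcal L_i$ is still ample and $N\otimes\mathcal L_i^{-1}\cong(N\otimes\mathcal L_i)^{-1}$ is the inverse of an ample bundle.

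For the first vanishing I would tensor the exact sequence~(\ref{ob-seq}) by the invertible sheaf $\omega_X$ (exactness is preserved) and use the projection formula to obtain a resolution $0\to(\Omega_X/\tau_X)\otimes\omega_X\to\mathcal F^0\to\cdots\to\mathcal F^N\to 0$ with $\mathcal F^i=(\pi_i)_{\ast}(\Omega^1_{\tilde X_{[i]}}\otimes L_i\otimes\mathcal L_i^{-1})$ (taking $L_0=\mathcal O$). The hypercohomology spectral sequence $E_1^{p,q}=H^q(X,\mathcal F^p)\Rightarrow H^{p+q}((\Omega_X/\tau_X)\otimes\omega_X)$ then reduces the problem to showing $H^{n-2-i}(X,\mathcal F^i)=0$ for every $i\ge 0$, that is, by remarks (i)--(ii), $H^{n-2-i}(\tilde X_{[i]},\Omega^1_{\tilde X_{[i]}}\otimes(L_i\otimes\mathcal L_i)^{-1})=0$ on a smooth variety of dimension $n-i$ with $L_i\otimes\mathcal L_i$ ample. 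Since $1+(n-2-i)=n-1-i<n-i$, this is exactly Theorem~\ref{KN} (this is the point at which characteristic zero is used), so $H^2(T_X)=0$.

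For the logarithmic case I would tensor the exact sequence~(\ref{log-seq1}) by $\omega_X$ to get $0\to(\Omega_X/\tau_X)\otimes\omega_X\to\Omega_X(log)\otimes\omega_X\to\mathcal K^1\to\cdots\to\mathcal K^m\to 0$ with $\mathcal K^j=(\pi_j)_{\ast}(M_j\otimes\mathcal L_j^{-1})$, split it into short exact sequences $0\to\mathcal W^j\to\mathcal K^j\to\mathcal W^{j+1}\to 0$, and chase the long exact cohomology sequences: then $H^{n-2}(\Omega_X(log)\otimes\omega_X)$ vanishes as soon as $H^{n-2}((\Omega_X/\tau_X)\otimes\omega_X)=0$ (already proved) and $H^{n-1-j}(X,\mathcal K^j)=H^{n-1-j}(\tilde X_{[j]},(M_j\otimes\mathcal L_j)^{-1})=0$ for $j=1,\dots,m$, the latter being Theorem~\ref{KN} with $a=0$ since $n-1-j<n-j=\dim\tilde X_{[j]}$. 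The only non-formal ingredient is the positivity statement in remark (ii) --- that twisting an anti-ample bundle on a stratum $\tilde X_{[i]}$ by the $2$-torsion sheaf $L_i$ (resp.\ $M_j$) keeps it anti-ample, so that AKN absorbs the torsion twist; everything else is the standard interplay of Serre duality, the projection formula and the resolutions of Theorem~\ref{ob3}. The step I expect to require the most care is the bookkeeping of cohomological degrees in the devissage, since the inequalities needed for AKN hold with no slack (each by exactly one), so the alignment of the resolution index with the ambient stratum dimension must be tracked precisely.
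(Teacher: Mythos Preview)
Your proposal is correct and follows essentially the same route as the paper: both arguments tensor the resolutions of Theorem~\ref{ob3} by $\omega_X$, push the computation to the smooth strata $\tilde X_{[i]}$ via the projection formula and finiteness of $\pi_i$, and then invoke Akizuki--Kodaira--Nakano on each stratum, using that the $2$-torsion twist does not disturb ampleness. The only cosmetic difference is that you package the d\'evissage for $H^2(T_X)$ as a hypercohomology spectral sequence, whereas the paper splits~(\ref{ob-seq}) into short exact sequences $0\to M_k\to(\pi_k)_{\ast}(\cdots)\to M_{k+1}\to 0$ and chases the long exact sequences directly; these are equivalent bookkeeping devices, and your degree count $1+(n-2-i)<n-i$ matches the paper's exactly.
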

\begin{corollary}\label{effectivity}
Let $X$ be a Fano variety defined over an algebraically closed field of characteristic zero with normal crossing singularities. Then any formal deformation of $X$ is effective.
\end{corollary}
\begin{proof}
This follows since $H^2(X,\mathcal{O}_X)=0$ and Grothendiecks criterion of effectivity~\cite[Theorem 2.5.13]{Ser06}.
\end{proof}

Theorem ~\ref{ob4} and Theorems~\ref{hull},~\ref{log-obstructions} imply that
\begin{corollary}\label{smooth-hull}
Let $X$ be a Fano variety with normal crossing singularities defined over a field $k$ of characteristic zero. Assume that $X$ has a semistable logarithmic structure $\mathcal{M}$. Then $\mathrm{LD}(X,\mathcal{M})$ is smooth.
\end{corollary}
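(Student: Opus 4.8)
The plan is to dualize and then reduce both vanishings to the Akizuki--Kodaira--Nakano theorem on the smooth strata of $X$. Set $n=\dim X$. Since $X$ has normal crossing singularities it is locally a hypersurface, hence a local complete intersection, in particular Gorenstein, so $\omega_X$ is invertible and Serre duality is available on $X$. By Corollary~\ref{ob2}, $H^2(T_X)=H^{n-2}((\Omega_X/\tau_X)\otimes\omega_X)$; and since $\Omega_X(\log)$ is locally free of rank $n$, so is $T_X(\log)=\mathcal{H}om(\Omega_X(\log),\mathcal{O}_X)$, whence $H^2(T_X(\log))$ is dual to $H^{n-2}(\Omega_X(\log)\otimes\omega_X)$ by Serre duality. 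So it suffices to prove the two vanishings $H^{n-2}((\Omega_X/\tau_X)\otimes\omega_X)=0$ and $H^{n-2}(\Omega_X(\log)\otimes\omega_X)=0$.

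For the first I would tensor the exact sequence~(\ref{ob-seq}) of Theorem~\ref{ob3} with the invertible sheaf $\omega_X$ and feed the resulting resolution of $(\Omega_X/\tau_X)\otimes\omega_X$ into a hypercohomology spectral sequence. As each $\pi_p$ is finite, $R^{>0}(\pi_p)_*=0$ and the projection formula applies, so the $E_1$ page is $E_1^{p,q}=H^q(\tilde X_{[p]},\Omega_{\tilde X_{[p]}}\otimes L_p\otimes\pi_p^*\omega_X)$ (with $L_0=\mathcal{O}$), and it is enough to show these groups vanish for $p+q=n-2$. Here $\tilde X_{[p]}$ is \'etale-locally a disjoint union of linear subspaces, hence a disjoint union of smooth projective varieties of dimension $n-p$; fix a connected component $Z$. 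A local computation with Poincar\'e residues (i.e.\ the conductor/adjunction formula) identifies $\pi_p^*\omega_X|_Z\cong\omega_Z(\tilde D_Z)$ for a reduced simple normal crossing divisor $\tilde D_Z$ on $Z$; since $\omega_X^{-1}$ is ample and $\pi_p$ is finite, the line bundle $\mathcal{L}_Z:=\pi_p^*\omega_X^{-1}|_Z=\omega_Z^{-1}(-\tilde D_Z)$ on $Z$ is ample, and we are reduced to $H^q(Z,\Omega_Z^1\otimes\mathcal{L}_Z^{-1}\otimes(L_p|_Z))=0$ for $p+q=n-2$. To remove the $2$-torsion twist $L_p|_Z$ I would pass to the finite \'etale double cover $\rho\colon Z'\to Z$ with $\rho_*\mathcal{O}_{Z'}\cong\mathcal{O}_Z\oplus(L_p|_Z)$ (take $Z'=Z\sqcup Z$ if $L_p|_Z$ is trivial): then $H^q(Z,\Omega_Z^1\otimes\mathcal{L}_Z^{-1}\otimes L_p|_Z)$ is a direct summand of $H^q(Z,\Omega_Z^1\otimes\mathcal{L}_Z^{-1}\otimes\rho_*\mathcal{O}_{Z'})\cong H^q(Z',\Omega_{Z'}^1\otimes(\rho^*\mathcal{L}_Z)^{-1})$, where I use that $\rho$ is \'etale (so $\rho^*\Omega_Z^1=\Omega_{Z'}^1$) and affine and that $\rho^*\mathcal{L}_Z$ is still ample. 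By Theorem~\ref{KN} this vanishes once $1+q<\dim Z'=n-p$, i.e.\ $p+q<n-1$, which holds because $p+q=n-2$; for $p=0$ there is no twist and Theorem~\ref{KN} applies directly on $\tilde X$. Hence $H^{n-2}((\Omega_X/\tau_X)\otimes\omega_X)=0$ and $H^2(T_X)=0$.

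For the logarithmic statement I would extract from~(\ref{log-seq1}) the short exact sequence $0\to(\Omega_X/\tau_X)\otimes\omega_X\to\Omega_X(\log)\otimes\omega_X\to\mathcal{Q}\otimes\omega_X\to 0$, with $\mathcal{Q}=\operatorname{im}(\lambda_1)$, together with the resolution $0\to\mathcal{Q}\to(\pi_1)_*(\mathcal{O}_{\tilde X_{[1]}}\otimes M_1)\to\cdots\to(\pi_m)_*(\mathcal{O}_{\tilde X_{[m]}}\otimes M_m)\to 0$. Since $H^{n-2}((\Omega_X/\tau_X)\otimes\omega_X)=0$ by the first part, the long exact cohomology sequence gives an injection $H^{n-2}(\Omega_X(\log)\otimes\omega_X)\hookrightarrow H^{n-2}(\mathcal{Q}\otimes\omega_X)$, and it remains to kill the right-hand side. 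Tensoring the resolution with $\omega_X$ and running the same spectral sequence reduces this to $H^q(\tilde X_{[s]},M_s\otimes\pi_s^*\omega_X)=0$ for $s\geq 1$ and $q=n-1-s$; on a component $Z$ of $\tilde X_{[s]}$ (smooth projective of dimension $n-s$), after trivializing the $2$-torsion $M_s|_Z$ by the \'etale double cover as above, this becomes $H^q(Z',\mathcal{O}_{Z'}\otimes(\rho^*\mathcal{L}_Z)^{-1})$ with $\mathcal{L}_Z=\pi_s^*\omega_X^{-1}|_Z$ ample, which vanishes by Theorem~\ref{KN} because $0+q=n-1-s<n-s=\dim Z'$. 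Therefore $H^{n-2}(\Omega_X(\log)\otimes\omega_X)=0$ and $H^2(T_X(\log))=0$.

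The main obstacle I anticipate is the conductor/adjunction identification $\pi_p^*\omega_X|_Z\cong\omega_Z(\tilde D_Z)$ with $\tilde D_Z$ reduced and simple normal crossing, since this is exactly what turns $\pi_p^*\omega_X^{-1}|_Z$ into an ample line bundle to which Akizuki--Kodaira--Nakano applies; a secondary, purely bookkeeping, matter is to keep track of the spectral-sequence indices so that the AKN inequality $a+b<\dim Z$ holds, which works precisely because we are at $H^2$, i.e.\ total degree $n-2$, one below the critical value $n-1$. The reduction of the $2$-torsion twists via \'etale double covers is routine but has to be carried out.
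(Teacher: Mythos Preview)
Your proposal is correct and follows the same overall strategy as the paper: Serre-dualize to $H^{n-2}((\Omega_X/\tau_X)\otimes\omega_X)$ and $H^{n-2}(\Omega_X(\log)\otimes\omega_X)$, then use the resolutions of Theorem~\ref{ob3} to reduce to Akizuki--Kodaira--Nakano vanishing on the smooth strata $\tilde X_{[k]}$. Your hypercohomology spectral sequence is equivalent to the paper's device of breaking the long exact sequence into short ones via $M_k=\mathrm{Im}(\delta_k)$ and chasing the vanishing inductively.

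You are, however, making two steps harder than necessary. First, the ``main obstacle'' you flag --- the adjunction identification $\pi_p^*\omega_X|_Z\cong\omega_Z(\tilde D_Z)$ --- is not needed at all: you yourself deduce ampleness of $\pi_p^*\omega_X^{-1}|_Z$ directly from the fact that $\omega_X^{-1}$ is ample and $\pi_p$ is finite, and that is all the argument uses. Second, the \'etale double cover to trivialize the $2$-torsion $L_p$ is superfluous: since $L_p^{\otimes 2}\cong\mathcal{O}_{\tilde X_{[p]}}$, the line bundle $L_p\otimes\pi_p^*\omega_X^{-1}$ has ample square and is therefore itself ample, so Theorem~\ref{KN} applies directly on $\tilde X_{[p]}$ with $\mathcal{L}=L_p\otimes\pi_p^*\omega_X^{-1}$. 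This is exactly what the paper does in one line. With these two simplifications your proof and the paper's coincide.
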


\begin{proof}[Proof of Theorem~\ref{ob4}]
We only prove the vanishing of $H^2(T_X)$. The remaining parts of the theorem are proved in exactly the same way by using the exact sequences~(\ref{ob3}.2) and ~(\ref{ob3}.3.) and in addition that by Serre duality, $H^2(X,\mathcal{O}_X)=H^{n-2}(X,\omega_X)$, where $n = \dim X$.

By Corollary~\ref{ob2},
\begin{equation}\label{ob-eq5}
H^2(T_X)=H^{n-2}((\Omega_X/\tau_X)\otimes \omega_X)
\end{equation}
where $n=\dim X$. Then by Theorem~\ref{ob3}, there is an exact sequence
\begin{gather}\label{ob-seq4}
0 \la \Omega_X/\tau_X \stackrel{\delta_0}{\la} (\pi_0)_{\ast}\Omega_{\tilde{X}} \stackrel{\delta_1}{\la}  (\pi_1)_{\ast}(\Omega_{\tilde{X}_{[1]}}\otimes L_1)
\stackrel{\delta_2}{\la} \cdots \stackrel{\delta_N}{\la}(\pi_N)_{\ast}(\Omega_{\tilde{X}_{[N]}}\otimes L_N) \la 0
\end{gather}
where $N \leq \dim X$, $\tau_X$ is the torsion part of $\Omega_X$ and $L_i$ is an invertible sheaf on $\tilde{X}_{[i]}$ such that $L_i^{\otimes 2}
\cong \mathcal{O}_{\tilde{X}_{[i]}}$.

Tensoring~(\ref{ob-seq4} ) with $\omega_X$ and taking into consideration that 
\[
(\pi_i)_{\ast}(\Omega_{\tilde{X}_{[i]}} \otimes (\pi_i)^{\ast}\omega_X)=(\pi_i)_{\ast} \Omega_{\tilde{X}_{[i]}} \otimes \omega_X
\]
we get the exact sequence
\begin{gather*}
0 \la (\Omega_X/\tau_X) \otimes \omega_X  \stackrel{\delta_0}{\la} (\pi_0)_{\ast}(\Omega_{\tilde{X}}\otimes \pi_0^{\ast}\omega_X) \stackrel{\delta_1}{\la}  (\pi_1)_{\ast}(\Omega_{\tilde{X}_{[1]}}\otimes L_1 \otimes \pi_1^{\ast}\omega_X) 
\stackrel{\delta_2}{\la} \cdots \\
\cdots \stackrel{\delta_N}{\la}(\pi_N)_{\ast}(\Omega_{\tilde{X}_{[N]}}\otimes L_N\otimes \pi_N^{\ast}\omega_X) \la 0
\end{gather*}

Let $M_k=\mathrm{Im}(\delta_k)$, $1 \leq k \leq N$. Then the above sequence splits into
\begin{gather*}
0 \la (\Omega_X/\tau_X)\otimes \omega_X  \stackrel{\delta_1}{\la} (\pi_0)_{\ast}(\Omega_{\tilde{X}}\otimes \pi_0^{\ast}\omega_X) \stackrel{\delta_2}{\la} M_1\la 0\\
0 \la M_k \la (\pi_k)_{\ast}(\Omega_{\tilde{X}_{[k]}}\otimes L_k\otimes \pi_k^{\ast}\omega_X) \la M_{k+1} \la 0
\end{gather*}
where $1 \leq k \leq N-1$, $N \leq n=\dim X$ and $M_N=(\pi_N)_{\ast}(\Omega_{\tilde{X}_{[N]}}\otimes L_N\otimes \pi_N^{\ast}\omega_X)$. Therefore we get exact sequences in cohomology
\begin{gather}\label{eq-ob4-1}
\cdots H^{n-3}(M_1) \la H^{n-2}( (\Omega_X/\tau_X)\otimes \omega_X ) \la H^{n-2}((\pi_0)_{\ast}(\Omega_{\tilde{X}}\otimes \pi_0^{\ast}\omega_X)) \la \cdots \\
\cdots H^{n-k-3}(M_{k+1}) \la H^{n-k-2}(M_k) \la H^{n-k-2}((\pi_k)_{\ast}(\Omega_{\tilde{X}_{[k]}}\otimes L_k \otimes \pi_k^{\ast}\omega_X)) \la \cdots\nonumber
\end{gather}

Now since $\pi_k$ are finite, it follows that $(\pi_k^{\ast}\omega_X)^{-1}$ are ample, for all $0 \leq k \leq N$ and hence $(L_k^{-1} \otimes\pi_k ^{\ast}\omega_X)^{-1}$ is ample too, since $L_k$ is 2-torsion and invertible. 
Moreover, $\tilde{X}_{[k]}$ is smooth of dimension $n-k$.  Therefore, and by using the Kodaira-Nakano vanishing theorem~\cite[Corollary 6.4]{EV92},
\[
H^{n-k-2}((\pi_k)_{\ast}(\Omega_{\tilde{X}_{[k]}}\otimes L_k \otimes \pi_k^{\ast}\omega_X)) =0
\]
for all $1 \leq k \leq N$. Hence from~(\ref{eq-ob4-1}) and by induction it follows that $H^{n-k-2}(M_k)=0$, for all $0 \leq k \leq N$ and hence again by~(\ref{eq-ob4-1}) it follows that there is an exact sequence
\[
H^{n-3}(M_1) \la H^{n-2}((\Omega_X/\tau_X)\otimes \omega_X) \la H^{n-2}((\pi_0)_{\ast}(\Omega_{\tilde{X}}\otimes \pi_0^{\ast}\omega_X)) 
\]
and therefore 
\[
H^2(T_X)=H^{n-2}((\Omega_X/\tau_X)\otimes \omega_X) =0
\]
as claimed. 

\end{proof}

Unfortunately, in general I cannot say much about the other obstruction space, namely $H^1(T^1_X)$. However, since $T^1_X$ is a line bundle on the singular locus $X_{[1]}$ of $X$, it is much easier handled than $H^2(T_X)$ and it will vanish if we impose certain positivity requirements on $T^1_X$.

The case when $X$ has only double points exhibits much better behavior and it deserves special consideration. The difference between this and the general case is that the singular locus $X_{[1]}$ of $X$ is smooth.

\begin{theorem}\label{db-ob}
Let $X$ be a Fano variety with only double point normal crossing singularities such that $T^1_X$ is finitely generated by its global sections. Then
\[
H^2(T_X)=H^1(T^1_X)=0
\]
\end{theorem}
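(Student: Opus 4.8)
The plan is to split the statement into its two assertions and dispatch them separately. The vanishing $H^2(T_X)=0$ is immediate: $X$ is a Fano variety with normal crossing singularities over a field of characteristic zero, so Theorem~\ref{ob4} applies verbatim. So the real content is $H^1(T^1(X))=0$. First I would reduce this to a statement on the singular locus. Write $D:=X_{[1]}$. Since $X$ has only double point normal crossing singularities, in a suitable \'etale neighbourhood $X$ looks like $\{x_1x_2=0\}$, whose singular locus is $\{x_1=x_2=0\}$; hence $D$ is \'etale-locally isomorphic to affine space and is therefore \emph{smooth}. It is projective, being closed in the projective variety $X$. By Lemma~\ref{T1} (applied \'etale-locally) $T^1(X)$ is an invertible sheaf on $D$, and since $T^1(X)$ is an $\mathcal{O}_D$-module we have $H^1(X,T^1(X))=H^1(D,T^1(X))$. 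Thus everything reduces to proving $H^1(D,\mathcal{L})=0$ for the globally generated line bundle $\mathcal{L}:=T^1(X)$ on the smooth projective variety $D$.

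The key step, which I expect to be the main obstacle, is to show that $D$ is itself a Fano variety. Let $\nu\colon \widetilde X\to X$ be the normalization. Because $X$ is a normal crossing variety, $\widetilde X$ is smooth, and since $X$ has only double points the conductor $\widetilde D\subset\widetilde X$ is a smooth divisor and $\rho:=\nu|_{\widetilde D}\colon \widetilde D\to D$ is finite, surjective (\'etale of degree $2$, in fact). The adjunction (``different'') formula $\nu^{\ast}\omega_X\cong\omega_{\widetilde X}(\widetilde D)$, together with adjunction on the smooth variety $\widetilde X$, gives
\[
\rho^{\ast}\!\left(\omega_X|_D\right)\;\cong\;\omega_{\widetilde X}(\widetilde D)|_{\widetilde D}\;\cong\;\omega_{\widetilde D}\;\cong\;\rho^{\ast}\omega_D .
\]
Now $\omega_X^{-1}=-K_X$ is ample, so $\omega_X^{-1}|_D$ is ample on $D$, hence $\rho^{\ast}(\omega_D^{-1})\cong\rho^{\ast}(\omega_X^{-1}|_D)$ is ample on $\widetilde D$ (the pullback of an ample bundle under a finite morphism). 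As $\rho$ is finite and surjective, ampleness descends and $\omega_D^{-1}$ is ample; that is, $D$ is a smooth projective Fano variety. (If $\dim D\le 0$, e.g. when $X$ is a curve, this and everything below is vacuous and $H^1(T^1(X))=0$ trivially.)

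Finally I would conclude by Kodaira vanishing on $D$. By hypothesis $T^1(X)$ is generated by its global sections, hence is nef. Tensoring the nef line bundle $T^1(X)$ with the ample line bundle $\omega_D^{-1}$ on the projective variety $D$ produces an ample line bundle $\mathcal{A}:=T^1(X)\otimes\omega_D^{-1}$ (nef $\otimes$ ample is ample, e.g. by Nakai--Moishezon). Since we are in characteristic zero and $D$ is smooth and projective, the Akizuki--Kodaira--Nakano vanishing theorem (Theorem~\ref{KN}), combined with Serre duality, yields $H^i(D,\omega_D\otimes\mathcal{A})=0$ for all $i>0$, i.e. $H^i(D,T^1(X))=0$ for $i>0$; in particular $H^1(T^1(X))=H^1(D,T^1(X))=0$. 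Together with $H^2(T_X)=0$ from Theorem~\ref{ob4}, this proves the theorem. The only genuinely non-formal ingredient is the identification of $\omega_D$ (up to pullback along $\rho$) with $\omega_X|_D$, which is what makes $D$ Fano; once that is in place, the combination ``globally generated $\Rightarrow$ nef'', ``nef $\otimes$ ample $=$ ample'', and Kodaira vanishing does the rest.
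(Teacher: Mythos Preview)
Your proof is correct and follows essentially the same route as the paper: invoke Theorem~\ref{ob4} for $H^2(T_X)=0$, then show the singular locus $D$ is a smooth Fano variety via the normalization and (sub)adjunction, and conclude $H^1(T^1(X))=0$ by writing $T^1(X)=\omega_D\otimes(T^1(X)\otimes\omega_D^{-1})$ and applying Kodaira-type vanishing to the ample line bundle $T^1(X)\otimes\omega_D^{-1}$. The only cosmetic difference is that the paper cites Kawamata--Viehweg where you invoke Akizuki--Kodaira--Nakano plus Serre duality; both amount to the standard Kodaira vanishing $H^i(\omega_D\otimes\mathcal{A})=0$ for $\mathcal{A}$ ample and $i>0$.
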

\begin{corollary}
Let $X$ be a Fano variety with only double point normal crossing singularities and such that $T^1_X$ is finitely generated by its global sections. Then $\mathrm{Def}(X)$ is smooth.
\end{corollary}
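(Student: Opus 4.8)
The plan is to treat the two vanishings in the order stated. The assertion $H^2(T_X)=0$ is nothing new: it is exactly Theorem~\ref{ob4}, which applies verbatim since $X$ is a Fano variety with normal crossing singularities, so no further argument is needed for it. The content of the theorem is therefore $H^1(T^1(X))=0$, and the role of the double-point hypothesis is that the singular locus $D=X_{[1]}$ is then \emph{smooth}, of dimension $n-1$ with $n=\dim X$ (the double loci are pairwise disjoint once triple points are excluded, so $X_{[2]}=\emptyset$). By Lemma~\ref{T1}, applied in an \'etale neighbourhood in which $X$ is a simple normal crossing divisor in a smooth variety, $T^1(X)$ is an invertible sheaf on $D$, and $H^1(X,T^1(X))=H^1(D,T^1(X))$ since $D\hookrightarrow X$ is a closed immersion. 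So the task reduces to proving $H^1\bigl(D,T^1(X)\bigr)=0$ for the globally generated line bundle $T^1(X)$ on the smooth projective variety $D$. I will do this by: (i) showing $D$ is a Fano manifold, i.e. $\omega_D^{-1}$ is ample; (ii) deducing that $T^1(X)\otimes\omega_D^{-1}$ is ample; (iii) applying the Akizuki--Kodaira--Nakano vanishing theorem.

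Step (i) is the crux. Let $\nu\colon\widetilde X\to X$ be the normalization. Because $X$ has only double points, $\widetilde X$ is smooth and the reduced conductor divisor $\widetilde D=\nu^{-1}(D)_{\mathrm{red}}\subset\widetilde X$ is smooth and maps to $D$ by a finite \'etale morphism $\tau\colon\widetilde D\to D$ of degree $2$; this is visible in the local model $x_0x_1=0$, whose normalization is $\{x_0=0\}\sqcup\{x_1=0\}$, each sheet meeting $\widetilde D$ along a copy of $D$. Since $X$ is Gorenstein (locally a hypersurface) and semi-normal, the adjunction formula for the conductor gives $\nu^{*}\omega_X\cong\omega_{\widetilde X}(\widetilde D)$ (again visible from the local model). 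Restricting this to $\widetilde D$ and using adjunction for the smooth divisor $\widetilde D\subset\widetilde X$ yields
\[
\tau^{*}\bigl(\omega_X|_D\bigr)\cong\omega_{\widetilde X}(\widetilde D)|_{\widetilde D}\cong\omega_{\widetilde D}\cong\tau^{*}\omega_D,
\]
the last isomorphism because $\tau$ is \'etale. Hence $\eta:=\omega_X|_D\otimes\omega_D^{-1}$ becomes trivial after pullback along the finite surjective $\tau$, so $\eta$ is numerically trivial: for a curve $C\subset D$, a curve $\widetilde C\subset\widetilde D$ dominating $C$ satisfies $0=\tau^{*}\eta\cdot\widetilde C=\deg(\widetilde C/C)\cdot(\eta\cdot C)$. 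Now $\omega_X^{-1}$ is ample on $X$, hence so is its restriction to the closed subscheme $D$; as ampleness depends only on the numerical class, $\omega_D^{-1}\cong\bigl(\omega_X^{-1}|_D\bigr)\otimes\eta^{-1}$ is ample too, i.e. $D$ is Fano. (If $X$ is reducible with smooth components the argument is even simpler: two applications of adjunction give $\omega_X|_D\cong\omega_D$ on the nose, with no torsion ambiguity.)

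Steps (ii) and (iii) are then routine. The line bundle $T^1(X)$ is globally generated, hence nef, so $A:=T^1(X)\otimes\omega_D^{-1}$ is nef $\otimes$ ample, hence ample. By Serre duality on the smooth projective $D$ of dimension $n-1$,
\[
H^1\bigl(D,T^1(X)\bigr)=H^1\bigl(D,\omega_D\otimes A\bigr)\cong H^{\,n-2}\bigl(D,A^{-1}\bigr)^{\vee},
\]
and $H^{\,n-2}(D,A^{-1})=H^{\,n-2}(D,\Omega_D^{0}\otimes A^{-1})=0$ by Theorem~\ref{KN}, since $A$ is ample and $0+(n-2)<n-1=\dim D$. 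Therefore $H^1(T^1(X))=0$ (and if $X$ is smooth there is nothing to prove). This also yields the corollary that $\mathrm{Def}(X)$ is smooth, both obstruction spaces $H^2(T_X)$ and $H^1(T^1(X))$ vanishing.

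The main obstacle is step (i): extracting the Fano condition on $D$ from the Fano condition on $X$, for which the conductor adjunction $\nu^{*}\omega_X\cong\omega_{\widetilde X}(\widetilde D)$ together with the \'etale-ness of $\widetilde D\to D$ does the job; everything else is formal. As elsewhere in the paper, the one genuinely restrictive hypothesis is characteristic zero, needed for Akizuki--Kodaira--Nakano vanishing.
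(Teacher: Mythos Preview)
Your proof is correct and follows essentially the same route as the paper: invoke Theorem~\ref{ob4} for $H^2(T_X)=0$, show the singular locus $D$ is Fano via the conductor adjunction $\nu^*\omega_X\cong\omega_{\tilde X}(\tilde D)$ and the \'etale double cover $\tilde D\to D$, then kill $H^1(T^1(X))$ by Kodaira-type vanishing. The only cosmetic differences are that the paper descends ampleness directly (finite pullback of $\omega_D^{-1}$ is ample $\Rightarrow$ $\omega_D^{-1}$ is ample) rather than going through numerical triviality of your $\eta$, and it cites Kawamata--Viehweg where you unwind via Serre duality and Theorem~\ref{KN}; both amount to the same thing.
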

\begin{question}
Is $Def(X)$ smooth for any Fano variety with normal crossing singularities? If this is true then $X$ is smoothable if and only if $T^1_X$ is finitely generated by global sections and hence this is a very natural condition to impose.
\end{question}
\begin{remark}
In general, $H^1(T^1_X)$ will not vanish. However if $X$ is smoothable, then $T^1_X$ must have some positivity properties and the one stated is the most natural one.
\end{remark}
\begin{proof}[Proof of Theorem~\ref{db-ob}]
In view of Theorem~\ref{ob4} we only need to show the vanishing of $H^1(T^1_X)$. In order to show this we will first show that the singular locus $Z=X_{[1]}$ of $X$ is a smooth Fano variety of dimension $\dim X-1$. The Fano part is the only part to be shown. Let $\pi \colon \tilde{X} \la X$ be the normalization and $\tilde{Z}=\pi^{-1}Z$. Then $\tilde{Z} \la Z$ is \'etale. By subadjunction we get that
\[
\pi^{\ast}\omega_X=\omega_{\tilde{X}} \otimes \mathcal{O}_{\tilde{X}}(\tilde{Z})
\]
Therefore,
\[
\omega_{\tilde{Z}}=\omega_{\tilde{X}}\otimes \mathcal{O}_{\tilde{X}}(\tilde{Z})\otimes \mathcal{O}_{\tilde{Z}}
\]
Hence $\omega_{\tilde{Z}}^{-1}$ is ample. But since $\tilde{Z} \la Z$ is \'etale, it follows that $\pi^{\ast}\omega_Z=\omega_{\tilde{Z}}$. Therefore $\omega_Z^{-1}$ is ample too and hence $Z$ is Fano as claimed.  Now
\[
H^1(T^1_X)=H^1(\omega_Z \otimes(T^1_X\otimes \omega_Z^{-1}))=0
\]
by the Kawamata-Viehweg vanishing theorem since if $T^1_X$ is finitely generated by global sections, then $T^1_X \otimes \omega_Z^{-1}$ is ample too.

\end{proof}

\section{Smoothings of Fanos}
In this section we obtain criteria for a Fano variety $X$ with normal crossing singularities to be smoothable. First we state a criterion for a variety $X$ with hypersurface singularities to be smoothable and moreover to be smoothable with a smooth total space.
\begin{proposition}\label{sm1}
Let $X$ be a reduced projective scheme with hypersurface singularities and let $D$ be its singular locus. Then
\begin{enumerate}
\item If $X$ is smoothable by a flat deformation $\mathcal{X} \la \Delta$ such that $\mathcal{X}$ is smooth, then $T^1_X=\mathcal{O}_D$.
\item Suppose that $T^1_X$ is finitely generated by its global sections and that $H^2(T_X)=H^1(T^1_X)=0$. Then $X$ is smoothable. Moreover, if $Def(X)$ is smooth then the converse is also true.
\end{enumerate}
\end{proposition}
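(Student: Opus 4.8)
For part (1), I would exploit that $X=\mathcal{X}_0=f^{-1}(0)$ is a Cartier divisor on the smooth scheme $\mathcal{X}$ with \emph{trivial} normal bundle: $\mathcal{N}_{X/\mathcal{X}}=\mathcal{O}_{\mathcal{X}}(X)|_X\cong\mathcal{O}_X$, since $\mathcal{O}_{\Delta}(0)$ is trivial on the affine $\Delta$. Dualizing the conormal sequence $0\to\mathcal{N}^{\vee}_{X/\mathcal{X}}\to\Omega_{\mathcal{X}}|_X\to\Omega_X\to 0$ and using that $\Omega_{\mathcal{X}}|_X$ is locally free (so $\mathcal{E}xt^1_X(\Omega_{\mathcal{X}}|_X,\mathcal{O}_X)=0$) I get an exact sequence
\[
T_{\mathcal{X}}|_X\longrightarrow \mathcal{N}_{X/\mathcal{X}}\cong\mathcal{O}_X\longrightarrow T^1(X)\longrightarrow 0 .
\]
Thus $T^1(X)$ is a cyclic $\mathcal{O}_X$-module, so $T^1(X)\cong\mathcal{O}_Z$ for a closed subscheme $Z\subseteq X$ whose support is the non-smooth locus of $X$, whence $Z_{\mathrm{red}}=D$. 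The crux is to upgrade this to $Z=D$: working \'etale-locally in coordinates where $\mathcal{X}$ is an affine space and $X=\{g=0\}$, the image of $T_{\mathcal{X}}|_X\to\mathcal{O}_X$ is the Jacobian ideal $(\partial g/\partial x_0,\dots,\partial g/\partial x_n)$, so one must check that for the local hypersurface models occurring here this ideal is radical and cuts out exactly the singular locus; for a normal crossing model $g=x_0\cdots x_r$ this reduces to the identity $\big(x_0\cdots x_r,\ x_0\cdots\hat{x}_i\cdots x_r:0\le i\le r\big)=\bigcap_{i<j}(x_i,x_j)=\mathcal{I}_D$, verified by a direct computation. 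This local identification is the main point of part (1).

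Turning to part (2), the first step is to show $\mathrm{Def}(X)$ is smooth. Combining Proposition~\ref{ob1} with the long exact $\mathrm{Ext}$-sequence of $0\to\tau_X\to\Omega_X\to\Omega_X/\tau_X\to 0$ gives, in degree two,
\[
H^2(T_X)\longrightarrow \mathrm{Ext}^2_X(\Omega_X,\mathcal{O}_X)\longrightarrow H^1(T^1(X))\longrightarrow H^3(T_X),
\]
so the vanishing $H^2(T_X)=H^1(T^1(X))=0$ forces the obstruction space $\mathrm{Ext}^2_X(\Omega_X,\mathcal{O}_X)$ to vanish and $\mathrm{Def}(X)$ to be smooth. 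The same sequence in degree one, together with $H^2(T_X)=0$, shows that the forgetful map $\mathrm{Ext}^1_X(\Omega_X,\mathcal{O}_X)\to H^0(T^1(X))$ is surjective.

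It remains to produce a genuine one-parameter smoothing. If $T^1(X)\cong\mathcal{O}_D$ this is immediate: by Proposition~\ref{d-semistable} $X$ carries a logarithmic structure $\mathcal{U}$, $H^1(T^1(X))=0$ holds automatically, $H^2(T_X(log))=0$ (Theorem~\ref{ob4} in the Fano case), and Corollary~\ref{log-smoothing} yields a smoothing over $\Delta$ --- in fact with smooth total space. In the remaining cases I would pick global sections $\sigma_1,\dots,\sigma_N$ generating $T^1(X)$, lift them through the surjection above to first order deformations, extend (by unobstructedness) to a flat family $\mathcal{X}\to\mathbb{A}^N$ realizing the $\sigma_i$ as Kodaira--Spencer directions, and recover a smoothing over a DVR by restricting to a general line through the origin, provided a general fibre of $\mathcal{X}\to\mathbb{A}^N$ is smooth. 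This last point is the main obstacle: a general section of the invertible sheaf $T^1(X)$ on $D$ vanishes along a divisor of $D$, so no single first order direction smooths $X$ along all of $D$ and a naive dimension count is inconclusive. I would overcome it either by successively deforming along general sections of $T^1$, lowering the dimension of the singular locus step by step until the $d$-semistable case is reached (checking that the relevant $\mathrm{Ext}^2$ stays zero along the way), or by a direct smoothness analysis of the total space near $X$ --- showing that the conormal map $\mathcal{I}/\mathcal{I}^2\to\Omega_{\mathcal{X}}|_X$ has one-dimensional kernel at every point of $D$, so $\mathcal{X}$ is smooth there and generic smoothness (characteristic zero) delivers a smooth general fibre. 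Hypothesis (2), double points, simplifies this considerably, since then $D$ is smooth.

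For the converse, suppose in addition that $\mathrm{Def}(X)$ is smooth and that $X$ is smoothable. Then the smoothing is an arc in the smooth, hence irreducible, space $\mathrm{Def}(X)$ meeting the open locus of smooth fibres, which is therefore dense; consequently a general first order deformation direction already extends to a smoothing, and pushing this forward along $\mathrm{Ext}^1_X(\Omega_X,\mathcal{O}_X)\to H^0(T^1(X))$ shows the general global section of $T^1(X)$ to be a smoothing direction, which forces $T^1(X)$ to be globally generated; smoothness of $\mathrm{Def}(X)$ together with the degree-two $\mathrm{Ext}$-sequence above then returns $H^2(T_X)=H^1(T^1(X))=0$.
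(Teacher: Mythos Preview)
Your argument for part (1) is essentially the paper's. Both produce a surjection $\mathcal{O}_X\twoheadrightarrow T^1(X)$ by dualizing a cotangent-type sequence and then conclude using that $T^1(X)$ is invertible on $D$. You dualize the conormal sequence of $X\subset\mathcal X$; the paper instead dualizes the relative cotangent sequence $0\to f^{*}\Omega_{\Delta}\to\Omega_{\mathcal X}\to\Omega_{\mathcal X/\Delta}\to 0$ on $\mathcal X$ (using $\mathcal{E}xt^1_{\mathcal X}(\Omega_{\mathcal X},\mathcal O_{\mathcal X})=0$ since $\mathcal X$ is smooth) to get $\mathcal O_{\mathcal X}\twoheadrightarrow T^1(\mathcal X/\Delta)$, and then restricts to the closed fibre via $T^1(\mathcal X/\Delta)\otimes_R R/m_R\cong T^1(X)$. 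Restricted to $X$ the two sequences coincide. For the final identification the paper just cites that $T^1(X)$ is invertible on $D$, so a surjection from $\mathcal O_D$ is an isomorphism; your Jacobian computation in the normal crossing model is precisely the local verification of this fact.

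For part (2) the paper gives no argument at all: it simply cites \cite[Theorem~12.5]{Tz10}. Your sketch, by contrast, has genuine gaps. First, to handle the case $T^1(X)\cong\mathcal O_D$ you invoke Proposition~\ref{d-semistable}, Theorem~\ref{ob4} and Corollary~\ref{log-smoothing}, all of which require normal crossing singularities (and Theorem~\ref{ob4} requires $X$ Fano); none of these hypotheses is present in Proposition~\ref{sm1}, so this branch of your argument does not apply at the stated generality. Second, in the remaining branch you correctly isolate the key difficulty --- that no single section of $T^1(X)$ smooths $X$ everywhere --- but the two proposed fixes are not carried out, and the ``$\mathcal X$ smooth near $D$'' heuristic is not justified. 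Third, your converse is too strong and the last step is incorrect: smoothness of $\mathrm{Def}(X)$ does \emph{not} force $H^2(T_X)=H^1(T^1(X))=0$, since these are only obstruction \emph{spaces}, so you cannot recover the vanishings from the degree-two sequence as claimed. (The paper's intended converse, as explained in the Introduction, concerns only the global generation of $T^1(X)$.) Note also that your exact sequence in degree two is valid for hypersurface singularities directly from the local-to-global $\mathrm{Ext}$ spectral sequence (using $\mathcal{E}xt^{\ge 2}_X(\Omega_X,\mathcal O_X)=0$ for l.c.i.), so you need not route it through Proposition~\ref{ob1}, which is stated only for normal crossings.
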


\begin{proof}
The second part is~\cite[Theorem 12.5]{Tz10}. We proceed to show the first part. Let $f \colon \mathcal{X} \la \Delta$ be a smoothing of $X$ such that $\mathcal{X}$ is smooth, where $\Delta=\mathrm{Spec}(R)$, $(R,m_R)$ is a discrete valuation ring. Let $T^1_{\mathcal{X}/\Delta}=\mathcal{E}xt^1_{\mathcal{X}}(\Omega_{\mathcal{X}/\Delta},\mathcal{O}_{\mathcal{X}})$ be Schlessinger's relative $T^1$ sheaf. Then dualizing  the exact sequence
\[
0 \la f^{\ast}\omega_{\Delta} = \mathcal{O}_{\mathcal{X}} \la \Omega_{\mathcal{X}} \la \Omega_{\mathcal{X}/\Delta} \la 0
\]
we get the exact sequence
\[
\cdots \la \mathcal{O}_{\mathcal{X}} \la T^1_{\mathcal{X}/\Delta} \la \mathcal{E}xt^1_{\mathcal{X}}(\Omega_{\mathcal{X}},\mathcal{O}_{\mathcal{X}}) =0
\]
Now restricting to the special fiber and taking into consideration that $\mathcal{X} \otimes_R R/m_r \cong X$ and that $T^1_{\mathcal{X}/\Delta}\otimes_R R/m_R=T^1_X$~\cite[Lemma 7.7]{Tz10}, we get that there is a surjection $\mathcal{O}_X \la T^1_X$. Moreover, $T^1_X$ is a line bundle on the singular locus $D$ of $X$. Hence, restricting on $Z$ it follows that $T^1_X \cong \mathcal{O}_D$, as claimed.
\end{proof}

\begin{remark}
The condition $T^1_X=\mathcal{O}_D$ is equivalent to Friedman's d-semistability condition in the case of reducible simple normal crossing schemes~\cite{Fr83}. One of the natural questions raised by Friedman is whether this condition is sufficient for a simple normal crossing variety to be smoothable. He showed that in the case of $K3$ surfaces this is true but Persson and Pinkham have shown that this is not true in general~\cite{PiPe83}. However this is true in the case of normal crossing (not necessarily reducible) Fano schemes, as shown by Theorem~\ref{sm2} below.
\end{remark}

\begin{theorem}\label{sm2}
Let $X$ be a Fano variety defined over an algebraically closed field of characteristic zero with normal crossing singularities. Assume that one of the following conditions holds:
\begin{enumerate}
\item $T^1_X$ is finitely generated by global sections and  $H^1(T^1_X)=0$.
\item $X$ has at worst double point normal crossing singularities and that $T^1_X$ is finitely generated by global sections.
\item $X$ is $d$-semistable, i.e., $T^1_X\cong \mathcal{O}_D$, where $D$ is the singular locus of $X$.
\end{enumerate}
Then $X$ is smoothable. Moreover, $X$ is smoothable by a flat deformation $f \colon \mathcal{X} \rightarrow \Delta$ such that $\mathcal{X}$ is smooth, if and only if $X$ is $d$-semistable.
\end{theorem}

\begin{proof}
(\ref{sm2}.1) follows directly from Theorem~\ref{ob4} and Proposition~\ref{sm1}.2.~(\ref{sm2}.2) follows from Theorems~\ref{db-ob} and~\ref{sm1}.2. Finally, suppose that $T^1_X\cong \mathcal{O}_D$, i.e., $X$ is $d$-semistable. Then according to Proposition~\ref{d-semistable}, $X$ admits a semi-stable logarithmic structure $\mathcal{M}$. Moreover, by Theorem~\ref{ob4} and Corollary~\ref{smooth-hull}, $(X,\mathcal{M})$ has unobstructed logarithmic deformations. Let $s$ be a nowhere vanishing section of $T^1_X$, as a sheaf on $D$. Then locally in the \'etale topology, \[
X=\mathrm{Spec}\frac{k[x_0,\ldots, x_d]}{(x_0\cdots x_r)}
\]
and $s$ corresponds to the first order deformation of $X$
\[
X_1=\mathrm{Spec}\frac{A_1[x_0,\ldots, x_d]}{(x_0\cdots x_r-t)} \rightarrow \mathrm{Spec} A_1
\]
where $A_1=k[t]/(t^2)$. This deformation is also a log-deformation for the semistable log structure of $X$. This is evident from the diagram
\[
\xymatrix{
\mathbb{N}^d \ar[r]^{\alpha}  & \frac{A_1[x_0,\ldots, x_d]}{(x_0\cdots x_r-t)} \\
\mathbb{N} \ar[u]^{\Delta} \ar[r]^{\beta} & A_1 \ar[u] 
}
\]
where $\Delta$ is the diagonal, $\alpha$ the semistable log-structure of $X$ and $\beta(n)=t^n$. Since $X$ has unobstructed log-deformations, this first order deformation lifts to a formal log-deformation of $X$ over $k[[t]]$. By Corollary~\ref{effectivity} any formal deformations is effective. Let then $f \colon (\mathcal{X},\mathcal{N}) \la (\Delta, \mathbb{N})$, where $\Delta=\mathrm{Spec}k[[t]]$ be the lifting. Then as in the proof of Proposition~\ref{sm1}, there is an exact sequence
\[
\cdots \la \mathcal{O}_{\mathcal{X}} \stackrel{\sigma}{\la} T^1_{\mathcal{X}/\Delta} \la \mathcal{E}xt^1_{\mathcal{X}}(\Omega_{\mathcal{X}},\mathcal{O}_{\mathcal{X}}) \rightarrow 0
\]
Now by the assumption that $T^1_X=\mathcal{O}_D$, and the non-triviality of the deformation, it follows that $\sigma$ is surjective and hence 
\[
\mathcal{E}xt^1_{\mathcal{X}}(\Omega_{\mathcal{X}},\mathcal{O}_{\mathcal{X}})=0
\]
Therefore, since $\mathcal{X}$ has complete intersection singularities, $\mathcal{X}$ is smooth.

\end{proof}

\section{Examples.}
In this section we construct one example of a smoothable and one of a non-smoothable normal crossing Fano 3-fold.

\begin{example}\label{ex1} Let $P \in Y \subset \mathbb{P}^4$ be a quadric surface with one ordinary double point locally analytically isomorphic to $(xy-zw=0)\subset \mathbb{C}^4$. Let $f \colon X \la Y$ be the blow up of $P \in Y$. Then $X$ is smooth and the $f$-exceptional divisor $E$ is isomorphic to $\mathbb{P}^1 \times \mathbb{P}^1$. Moreover, $-K_X-E$ is ample and $\mathcal{N}_{E/X}=\mathcal{O}_E(-1,-1)$.

Next we construct an embedding $E \subset X^{\prime}$ of $E$ into a smooth scheme $X^{\prime}$ such that, $\mathcal{N}_{E/X^{\prime}}=\mathcal{O}_{E}(1,1)$ and $-K_{X^{\prime}}-E$ is ample. Let $Z \subset \mathbb{P}^3$ be a smooth quadric surface. Then $\mathcal{N}_{Z/\mathbb{P}^3}=\mathcal{O}_Z(2,2)$. Let $\pi \colon X^{\prime} \la \mathbb{P}^3$ be the cyclic double cover of $\mathbb{P}^3$ ramified over $Z$. This is defined by the line bundle $\mathcal{L}=\mathcal{O}_{\mathbb{P}^3}(1)$ and the section $s$ of $\mathcal{L}^{\otimes 2}$ that corresponds to $Z$. Let $E=(\pi^{-1}(Z))_{\mathrm{red}} \cong Z$. Then $\pi^{\ast}Z = 2E$ and $\omega_{X}=\pi^{\ast}(\omega_{\mathbb{P}^3}\otimes \mathcal{L})$. Let $l^{\prime} \subset E$ be one of the rulings and $l = \pi_{\ast}(l^{\prime})$. then
\[
l^{\prime} \cdot E = 1/2 (l^{\prime} \cdot \pi^{\ast} Z) = 1/2( l \cdot Z) = 1
\]
and hence $\mathcal{N}_{E/\mathcal{X}^{\prime}}=\mathcal{O}_{E}(1,1)$.  Now let $Y$ be the scheme obtained by glueing $X$ and $X^{\prime}$ along $E$. This is a normal crossing Fano 3-fold with only double points. Then $T^1_Y= \mathcal{N}_{E/X} \otimes \mathcal{N}_{E/X^{\prime}} = \mathcal{O}_{E}$. Therefore, by Theorem~\ref{sm2}, $Y$ is smoothable.
\end{example}

\begin{example}\label{ex2} Let $E \subset X$ be as in example 1. Then let $Y$ be obtained by glueing two copies of $X$ along $E$. Then,
\[
T^1_Y=\mathcal{N}_{E/X} \otimes \mathcal{N}_{E/X} = \mathcal{O}_E(-2,-2)
\]
and hence $H^0(T^1(Y))=0$. Hence $Y$ is not smoothable~\cite[Theorem 12.3]{Tz09}. In fact, every deformation of $Y$ is locally trivial.
\end{example}

\end{document}